\numberwithin{equation}{section}\setlength{\textheight}{23cm}\setlength{\textwidth}{16cm}\setlength{\oddsidemargin}{0cm}\setlength{\evensidemargin}{0cm}\setlength{\topmargin}{0cm}
\theoremstyle{plain}\newtheorem{theorem}{Theorem}[section]
\newtheorem{lemma}[theorem]{Lemma}\newtheorem{proposition}[theorem]{Proposition}
\theoremstyle{definition}
\theoremstyle{definition}\newtheorem{remark}[theorem]{Remark}
\theoremstyle{definition}
\theoremstyle{definition}
\theoremstyle{definition}\newtheorem{question}[theorem]{Question}
\DeclareMathOperator{\Widim}{Widim}
\DeclareMathOperator{\mdim}{mdim}
\begin{document}

\title[A non-embeddable example for amenable group actions]
{Mean dimension and a non-embeddable example\\for amenable group actions}

\author{Lei Jin}
\address{Lei Jin:
Center for Mathematical Modeling, University of Chile and UMI 2807 - CNRS}
\email{jinleim@impan.pl}

\author{Kyewon Koh Park}
\address{Kyewon Koh Park:
Korea Institute for Advanced Study,
85 Hoegiro, Dongdaemun-gu, Seoul 02455, Korea}
\email{kkpark@kias.re.kr}

\author{Yixiao Qiao}
\address{Yixiao Qiao (corresponding author):
School of Mathematical Sciences,
South China Normal University,
Guangzhou 510631, China}
\email{yxqiao@mail.ustc.edu.cn}

\subjclass[2010]{37B05, 54F45}
\keywords{Mean dimension, Embedding, Hilbert cube, Minimal dynamical system, Amenable group action, Tiling.\\}

\begin{abstract}
For every infinite (countable discrete) amenable group $G$ and every positive integer $d$ we construct a minimal $G$-action of mean dimension $d/2$ which cannot be embedded in the full $G$-shift on $([0,1]^d)^G$.
\end{abstract}

\maketitle

\bigskip

\section{Introduction}
Mean dimension, which was introduced by Gromov \cite{Gromov} in 1999, is a numerical topological invariant of dynamical systems. As an analogue of topological dimension, its advantage has now been shown in the study of dynamical systems whose topological entropy equals infinity and whose phase space has infinite topological dimension. In particular, mean dimension is intimately involved in the \textit{embedding problem}.

We say that a dynamical system can be \textbf{embedded} in another if the former is topologically conjugate to a subsystem of the latter (see Subsection \ref{subsec:groupaction} for a formal definition). Let $G$ be an infinite countable discrete amenable group and $d$ a positive integer. The \textbf{full $G$-shift $\sigma$ on $([0,1]^{d})^{G}$} (or the \textit{shift} on $([0,1]^{d})^{G}$ for short) is defined by
$$\sigma:G\times([0,1]^{d})^{G}\to([0,1]^{d})^{G},\quad(g,(x_{h})_{h\in G})\mapsto(x_{hg})_{h\in G}.$$
In general, the embedding problem is to decide if a $G$-action can be embedded in the full $G$-shift on $([0,1]^{d})^{G}$.

For a (possibly) simpler picture, we may consider $G=\mathbb{Z}$ tentatively. An easy observation is that if a $\mathbb{Z}$-action can be embedded in the shift on $[0,1]^\mathbb{Z}$ then it cannot possess too many (in the sense of topological dimension) periodic points. The first successful attempt on the embedding problem for $\mathbb{Z}$-actions was made by Jaworski \cite{Jaworski} in 1974, whose result states that if a dynamical system $(X,\mathbb{Z})$ has no periodic points and if the space $X$ is finite dimensional, then $(X,\mathbb{Z})$ can be embedded in the shift on $[0,1]^\mathbb{Z}$. However, for the case that $X$ is an infinite dimensional space, the situation becomes much more complicated. In particular, embeddability of minimal dynamical systems in the shift on $[0,1]^\mathbb{Z}$ attracts extensive attention\footnote{Notice that a minimal $\mathbb{Z}$-action must have no periodic points unless its phase space is a finite set.}. In 2000, Lindenstrauss and Weiss \cite{Lindenstrauss--Weiss} developed mean dimension theory in dynamical systems, and especially, in connection with the embedding problem. They asserted that if a $\mathbb{Z}$-action can be embedded in the shift on $[0,1]^\mathbb{Z}$ then its mean dimension must be at most $1$; and meanwhile, they constructed a minimal $\mathbb{Z}$-action of mean dimension strictly greater than $1$. It follows immediately that not every minimal $\mathbb{Z}$-action can be embedded in the shift on $[0,1]^\mathbb{Z}$.

In the theory of topological dimension, the celebrated Menger--N\"obeling theorem asserts that any compact metric space of topological dimension strictly less than $n/2$ can be topologically embedded into $[0,1]^{n}$, where $n\in\mathbb{N}$ (see \cite{Hurewicz--Wallman} for details). This theorem is sharp, and naturally motivates a converse question for minimal dynamical systems. Let us state it precisely in the context of general group actions:

\begin{question}\label{minimalembedding}
Let $G$ be an infinite countable discrete amenable group and $d$ a positive integer. Determine the optimal value of constants $C\in[0,+\infty]$ such that the following assertion is true: If a minimal $G$-action has mean dimension strictly less than $C$, then it can be embedded in the full $G$-shift on $([0,1]^{d})^{G}$.
\end{question}

\begin{remark}
As an analogue of the Menger--N\"obeling embedding theorem, the assumption of amenability of $G$ in Question \ref{minimalembedding} is to guarantee that mean dimension of any $G$-action is situated in $[0,+\infty]$ (see \cite{Li}). The optimal value of such constants $C$ exists in $[0,+\infty]$ as well, because $C=0$ is in fact a trivial constant that makes the assertion true.
\end{remark}

An amazing result in this direction was due to Lindenstrauss \cite{Lindenstrauss} in 1999, who showed that if a minimal $\mathbb{Z}$-action has mean dimension strictly less than $d/36$ then it can be embedded in the shift on $([0,1]^{d})^\mathbb{Z}$. In 2014, Lindenstrauss and Tsukamoto \cite{Lindenstrauss--Tsukamoto} constructed a nice example of a minimal $\mathbb{Z}$-action of mean dimension equal to $d/2$, which cannot be embedded in the shift on $([0,1]^{d})^{\mathbb{Z}}$. This construction indicates that the answer to Question \ref{minimalembedding} is not larger than
$d/2$ in the setting of $G=\mathbb{Z}$. In 2015, going through harmonic and complex analysis, Gutman and Tsukamoto \cite{Gutman--Tsukamoto} proved a significant result: If a minimal $\mathbb{Z}$-action has mean dimension strictly less than $d/2$, then it can be embedded in the shift on $([0,1]^{d})^{\mathbb{Z}}$. Thus, the solution to Question \ref{minimalembedding} for $\mathbb{Z}$-actions is $d/2$.

However, if we proceed to a further stage $G=\mathbb{Z}^{k}$ ($k\in\mathbb{N}$), then we encounter serious difficulties. We refer to \cite{Gutman--Lindenstrauss--Tsukamoto} and \cite{Gutman--Qiao--Tsukamoto} for detailed explanations, ideas and techniques. Nevertheless, it turns out \cite{Gutman--Qiao--Tsukamoto} that $d/2$, as anticipated, is still the exact solution to Question \ref{minimalembedding} for the case $G=\mathbb{Z}^{k}$ (where $k\in\mathbb{N}$).\footnote{For related results see \cite{Gutman,Gutman--Qiao--Szabo,Gutman--Tsukamoto1}.}

In contrast to $\mathbb{Z}^{k}$-actions, there has been no essential progress with Question \ref{minimalembedding} in general settings. Crucial problems will definitely arise due to geometric structures of general groups different from $\mathbb{Z}^{k}$. However, it is reasonable to expect $d/2$ to be the solution to Question \ref{minimalembedding} for amenable group actions. The main result of the present paper is to confirm this assertion from above: The solution to Question \ref{minimalembedding} does not exceed $d/2$.\footnote{We would like to remind the reader that it is still unknown yet whether $d/2$ is the optimal.}

\begin{theorem}\label{maintheorem}
Let $G$ be an infinite countable discrete amenable group and $d$ a positive integer. Then there is a minimal $G$-action $(X,G)$ whose mean dimension is equal to $d/2$ such that $(X,G)$ cannot be embedded in the full $G$-shift on $([0,1]^{d})^{G}$.
\end{theorem}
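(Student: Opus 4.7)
The plan is to adapt the Lindenstrauss--Tsukamoto construction \cite{Lindenstrauss--Tsukamoto} from $G=\mathbb{Z}$ to an arbitrary infinite countable discrete amenable group, by replacing intervals in $\mathbb{Z}$ with exact tilings of $G$ by F\o{}lner sets. The essential ingredients are, on the combinatorial side, a nested sequence of minimal tilings of $G$ (available from the exact tiling theorem of Downarowicz--Huczek--Zhang or from Ornstein--Weiss quasi-tilings), and, on the dynamical side, compact ``half-dimensional'' constraints over each tile that together cut out a minimal $G$-subshift of $([0,1]^{d})^{G}$ of mean dimension $d/2$ whose local dimensional behavior is too concentrated to embed in the full shift.

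First I would fix a nested sequence of tilings $\mathcal{T}_{1}\succ\mathcal{T}_{2}\succ\cdots$ of $G$ such that each $\mathcal{T}_{n}$ uses only finitely many tile shapes, every shape is a F\o{}lner set with arbitrarily small relative boundary, every tile of $\mathcal{T}_{n+1}$ is a union of tiles of $\mathcal{T}_{n}$, and the orbit closure of the tilings in the appropriate tile-labeling $G$-shift is minimal. To every tile shape $S$ I would attach a compact subset $Y_{S}\subset([0,1]^{d})^{S}$ of topological dimension $\lfloor(d/2)|S|\rfloor$, designed to encode a half-dimensional constraint in the spirit of the Lindenstrauss--Tsukamoto graphs, and with consistent overlaps across common tile boundaries. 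The desired system $X\subset([0,1]^{d})^{G}$ is then the closed $G$-invariant set of configurations that, on every tile $T$ of every $\mathcal{T}_{n}$, lie in the appropriate translate of $Y_{S(T)}$; arranging compatibility between the $Y_{S}$ at successive scales ensures that $X$ is non-empty.

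Next I would verify minimality of $(X,G)$ and the equality $\mdim(X,G)=d/2$. Minimality follows from the minimality of the tiling base together with a fibre-wise transitivity property that forces every finite pattern appearing in $X$ to recur in every sufficiently large F\o{}lner set. For mean dimension, the upper bound comes from $\dim Y_{S}\le(d/2)|S|+O(|\partial S|)$ and the F\o{}lner condition, while the lower bound is witnessed by an explicit $(d/2)|F|$-dimensional continuous slice of the $F$-patches of $X$ for suitable F\o{}lner $F$.

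The main obstacle will be proving non-embeddability of $(X,G)$ in the full $G$-shift on $([0,1]^{d})^{G}$. The Lindenstrauss--Tsukamoto obstruction for $G=\mathbb{Z}$ relies on the linear order of intervals through a Borsuk--Ulam--type dimensional-excess argument across adjacent coordinates. The plan is to transport this argument to the amenable-group setting: assuming an equivariant embedding $\phi\colon X\hookrightarrow([0,1]^{d})^{G}$, passing to patches over a very large F\o{}lner set $F$, and exploiting the transversality built into the $Y_{S}$ should force $\phi$ to inject a space of local dimension strictly greater than $d|F|-C|\partial F|$ into $([0,1]^{d})^{F}$, contradicting the Menger--N\"obeling theorem once $|\partial F|/|F|$ is small enough. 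Setting up a combinatorial indexing of the dimensional excess across tiles in the absence of a natural linear order on $G$, and tuning the choice of tilings, constraints, and F\o{}lner sets so that the Borsuk--Ulam--type argument closes uniformly, is expected to be the technically hardest part of the proof.
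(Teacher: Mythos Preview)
There is a fundamental gap in your plan, and it lies not in the tiling machinery (which you have essentially right) but in the choice of alphabet. You propose to build $X$ as a subshift of $([0,1]^{d})^{G}$ and then show it cannot be embedded in the full $G$-shift on $([0,1]^{d})^{G}$. But any subshift of $([0,1]^{d})^{G}$ embeds there tautologically via the inclusion map, so the theorem as you have set it up is false before you even begin the non-embeddability argument. No amount of Borsuk--Ulam or Menger--N\"obeling reasoning over F\o{}lner patches will rescue this, because the embedding you are trying to rule out already exists.

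The paper's construction avoids this trap by working over a different alphabet: the tripod $P$ (the cone over a three-point space), which is one-dimensional but has the key property, imported from \cite{Lindenstrauss--Tsukamoto}, that $P^{n}$ admits no $\epsilon$-embedding into $\mathbb{R}^{2n-1}$ for $0<\epsilon<1$. The system $X$ is built as a subshift of $P^{G}$ (or $(P^{d})^{G}$ for general $d$) in which, over each tile of a nested sequence of syndetic tilings, slightly more than half the coordinates are left completely free in $P$ and the rest are pinned to prescribed values. This gives $\mdim(X,G)=d/2$. The free coordinates over a F\o{}lner shape $T_{k}$ form a set $J_{k}$ with $|J_{k}|>\tfrac{1+\delta_{k}}{2}|T_{k}|$, and one shows there is a distance-nondecreasing map $(P^{J_{k}},d_{l^{\infty}})\to(X,\rho_{T_{k}})$. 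A hypothetical equivariant embedding $f\colon X\to([0,1]^{d})^{G}$ would then yield, after projecting to a patch $F_{N}T_{k}$, an $\epsilon$-embedding of $P^{|J_{k}|}$ into $[0,1]^{d|F_{N}T_{k}|}$; since the F\o{}lner condition forces $|F_{N}T_{k}|<(1+\delta_{k})|T_{k}|<2|J_{k}|$ for large $k$, this contradicts the tripod obstruction. The entire non-embeddability step is therefore a purely local dimension-theoretic fact about $P^{n}$, and no group-level linear order or new Borsuk--Ulam argument is needed. Your plan should be revised to use $P$ (or a similar space with this $\epsilon$-embedding obstruction) as the alphabet, after which the outline you gave for tilings, minimality, and mean dimension is close to what the paper does.
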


\bigskip

This paper is organized as follows. In Section \ref{sec:preliminaries}, we gather basic notions in amenable group actions and mean dimension; to prepare our proof we also collect fundamental tools and necessary propositions, especially including tilings of amenable groups. In Section \ref{sec:maintheorem}, we provide a constructive proof of Theorem \ref{maintheorem}.

\bigskip

\textbf{Acknowledgements.}
A part of this research was done when Yixiao Qiao visited the Korea Institute for Advanced Study (KIAS) in 2018. The authors would like to thank Professor Dou Dou, Professor Tomasz Downarowicz, Professor Yonatan Gutman, Professor Masaki Tsukamoto, and Professor Guohua Zhang for their warm comments and insightful suggestions, as well as helpful discussions. L. Jin was supported by Basal Funding AFB 170001 and Fondecyt Grant No. 3190127, and was partially supported by NNSF of China No. 11971455. Y. Qiao was supported by NNSF of China No. 11901206.

\bigskip

\section{Preliminaries}\label{sec:preliminaries}
\subsection{Group actions}\label{subsec:groupaction}
Throughout this paper, by a \textbf{$G$-action} we always understand a triple $(X,G,\Phi)$, where $X$ is a compact metric space, $G$ is an infinite countable discrete amenable\footnote{The terminology of amenability is planned to be presented in the next subsection.} group with the identity element $e$, and $$\Phi:G\times X\to X,\quad(g,x)\mapsto\Phi(g,x)$$ is a continuous mapping satisfying that
$$\Phi(e,x)=x,\quad\quad\Phi(gh,x)=\Phi(g,\Phi(h,x)),\quad\forall x\in X,\;\forall g,h\in G.$$
Usually, $(X,G,\Phi)$ and $\Phi(g,x)$ are abbreviated to $(X,G)$ and $gx$, respectively.

\medskip

Let $(X,G)$ be a $G$-action. For a subset $F$ of $G$ and a point $x\in X$, we set $$Fx=\{gx:g\in F\}\subset X.$$ We say that $(X,G)$ is \textbf{minimal} if for every $x\in X$, its \textit{orbit} $Gx$ is dense in $X$. A subset $S$ of $G$ is called \textbf{syndetic} if there exists a finite subset $F$ of $G$ such that $G=FS$, where $FS=\{fs:f\in F,s\in S\}$. A point $x\in X$ is said to be \textbf{almost periodic} if for each neighborhood $U$ of $x$, there is a syndetic subset $S$ of $G$ such that $Sx\subset U$. We recall that minimality can be equivalently characterized as follows.

\begin{lemma}[{\cite[Chapter 1]{Auslander}}]\label{minimality}
A $G$-action $(X,G)$ is minimal if and only if $X$ is the orbit closure of an almost periodic point.
\end{lemma}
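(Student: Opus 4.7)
The plan is to prove the two implications separately. The forward direction combines compactness of $X$ with the fact that minimality makes every orbit dense, whereas the reverse direction proceeds by contradiction, separating $x$ from a hypothetical proper closed invariant subset via the metric and then using the syndetic return set supplied by almost periodicity to reach an absurdity. Since $X$ is a compact metric space, sequences suffice throughout, and the argument does not use amenability of $G$.

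For the forward direction, suppose $(X,G)$ is minimal. Fix any $x\in X$ and any open neighborhood $U$ of $x$. The set $W=\bigcup_{g\in G}g^{-1}U$ is open, $G$-invariant, and contains $x$. Its complement is a closed $G$-invariant subset of $X$ not containing $x$; minimality therefore forces it to be empty, so $W=X$. Compactness then provides a finite $F\subset G$ with $X=\bigcup_{g\in F}g^{-1}U$. For every $h\in G$ we find $g\in F$ with $ghx\in U$, so setting $S=\{g'\in G:g'x\in U\}$ gives $gh\in S$ and hence $h\in F^{-1}S$. Thus $G=F^{-1}S$, which proves $S$ is syndetic and $x$ is almost periodic. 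Of course $\overline{Gx}=X$ by minimality, so $X$ is the orbit closure of the almost periodic point $x$.

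For the reverse direction, suppose $X=\overline{Gx}$ for some almost periodic $x\in X$, and assume for contradiction that $(X,G)$ is not minimal: then there is a nonempty proper closed $G$-invariant subset $Z\subsetneq X$. Invariance of $Z$ forces $x\notin Z$, for otherwise $Gx\subset Z$ and so $X=\overline{Gx}\subset Z$. Since $Z$ is closed and $X$ is metric, one can choose a small open ball $U$ around $x$ with $\overline{U}\cap Z=\emptyset$. Almost periodicity furnishes a finite $F\subset G$ such that $G=FS$, where $S=\{g\in G:gx\in U\}$. Pick any $z\in Z$ and a sequence $g_n\in G$ with $g_n x\to z$, which exists because $Z\subset X=\overline{Gx}$. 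Decompose $g_n=f_n s_n$ with $f_n\in F$ and $s_n\in S$; since $F$ is finite, pass to a subsequence on which $f_n$ equals a single $f\in F$. Then $s_n x=f^{-1}g_n x\to f^{-1}z$, and each $s_n x$ lies in $U$, so $f^{-1}z\in\overline{U}$. But $G$-invariance of $Z$ also places $f^{-1}z\in Z$, contradicting $\overline{U}\cap Z=\emptyset$.

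Because this is a textbook result quoted from Auslander's monograph, I do not expect any genuine obstacle. The one mildly delicate point is the passage to a subsequence on which $f_n$ is constant: this is exactly where syndeticity (finiteness of $F$) does essential work, since without such a uniform finite choice of translate one could not pin the limit $f^{-1}z$ simultaneously into $\overline{U}$ and into $Z$.
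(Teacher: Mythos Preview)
Your proof is correct. Both directions are standard: the forward direction uses the open cover $\{g^{-1}U\}_{g\in G}$ and compactness to extract a finite $F$ witnessing syndeticity of the return set, and the backward direction uses the syndetic return set to force a limit point of the orbit into both $\overline{U}$ and the putative proper invariant set $Z$, which is impossible. The only spot worth a second glance is your implicit use of the equivalence between ``every orbit is dense'' (the paper's definition of minimality) and ``there is no nonempty proper closed invariant subset''; this equivalence is immediate, so there is no gap.

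However, there is nothing to compare against: the paper does not prove this lemma at all. It is quoted verbatim from \cite[Chapter~1]{Auslander} and used as a black box (specifically in Part~2 of Section~\ref{sec:maintheorem}, where minimality of $(X,\sigma)$ is verified by exhibiting $z$ as an almost periodic point whose orbit closure is $X$). Your write-up therefore supplies a self-contained argument where the paper simply defers to the literature.
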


\medskip

Let $K$ be a compact metric space and $d$ a metric on $K$. We equip $K^G$ with the product topology. A compatible metric $\rho$ on $K^G$ is defined by
\begin{equation}\label{metricrho}
\rho(x,y)=\sum_{g\in G}\alpha_{g}d(x_{g},y_{g}),\quad\forall x=(x_{g})_{g\in G},y=(y_{g})_{g\in G}\in K^{G},
\end{equation}
where $(\alpha_g)_{g\in G}\subset(0,+\infty)$ satisfies
$$\alpha_{e}=1,\quad\sum_{g\in G}\alpha_{g}<+\infty.\footnote{Note that $G$ is countable.}$$

\medskip

The \textbf{full $G$-shift $\sigma$ on $K^{G}$} is the $G$-action $(K^G,\sigma)$ defined by
$$\sigma:G\times K^{G}\to K^{G},\quad(g,(x_{h})_{h\in G})\mapsto(x_{hg})_{h\in G}.\footnote{Notice that the notation $\sigma$ may be kept in different full shifts if there is no ambiguity.}$$
A \textbf{subshift} of $(K^{G},\sigma)$ means a subsystem of the full $G$-shift on $K^{G}$.

\medskip

For $x=(x_{g})_{g\in G}\in K^{G}$ and $F\subset G$ we denote by $$x|_{F}=(x_{g})_{g\in F}\in K^F$$ the restriction of $x$ on $F$, and $$\pi_{F}:K^{G}\to K^{F},\quad x\mapsto x|_{F}$$ the canonical projection mapping. For $p\in K$ we set
$$x(F,p)=\left\{g\in F:x_{g}=p\right\}\subset G.$$

\medskip

Let $(X,G)$ and $(Y,G)$ be two $G$-actions. We say that $(X,G)$ can be \textbf{embedded} in $(Y,G)$ if there is a continuous injective mapping\footnote{Note that this mapping is indeed a homeomorphism of $X$ into $Y$ in our setting.} $f:X\to Y$ such that $f(gx)=gf(x)$ for all $g\in G$ and all $x\in X$. Such a mapping $f$ is called an \textbf{embedding} of $(X,G)$ into $(Y,G)$.

\medskip

\subsection{Tilings of amenable groups}
For a group $G$ we denote by $\mathcal{F}(G)$ the collection of all nonempty finite subsets of $G$. For $T\in\mathcal{F}(G)$ and $\epsilon>0$ we say that a subset $F$ of $G$ is \textbf{$(T,\epsilon)$-invariant} if $$\frac{|B(F,T)|}{|F|}<\epsilon,$$ where $$B(F,T)\footnote{This notation will be used in the sequel.}=\left\{g\in G:Tg\cap F\ne\emptyset,Tg\cap(G\setminus F)\ne\emptyset\right\}$$ and $|\cdot|$ denotes the cardinality of a set.

\medskip

A countable group $G$ is called \textbf{amenable} if there exists a sequence $\{F_{n}\}_{n=1}^{\infty}\subset\mathcal{F}(G)$ such that for any $g\in G$ we have $$\lim_{n\to\infty}\frac{|F_{n}\triangle gF_{n}|}{|F_{n}|}=0.$$ We call such a sequence $\{F_{n}\}_{n=1}^{\infty}$ a \textbf{F\o lner sequence} of the group $G$.

\medskip

An easy observation is that $\{F_{n}\}_{n=1}^{\infty}$ is a F\o lner sequence of $G$ if and only if for any $T\in\mathcal{F}(G)$ and any $\epsilon>0$, $F_{n}$ is $(T,\epsilon)$-invariant for $n$ sufficiently large if and only if for any $T\in\mathcal{F}(G)$ and any $\epsilon>0$, ${|F_{n}\triangle TF_{n}|}/{|F_{n}|}<\epsilon$ for $n$ sufficiently large.

\medskip

Now let $G$ be an infinite countable discrete amenable group.

We say that $\mathcal{T}$ is a \textbf{tiling} of $G$ if $\mathcal{T}\subset\mathcal{F}(G)$, $\bigcup_{T\in\mathcal{T}}T=G$ and $T\cap T'=\emptyset$ holds for any two distinct $T,T'\in\mathcal{T}$. Every element in the tiling $\mathcal{T}$ is called a \textbf{$\mathcal{T}$-tile} (or a \textbf{tile}). A tiling $\mathcal{T}$ of $G$ is said to be \textbf{finite} if there is a finite collection $\mathcal{S}_{\mathcal{T}}\subset\mathcal{F}(G)$ such that every $\mathcal{T}$-tile is a translation of some element in $\mathcal{S}_{\mathcal{T}}$, i.e., for each $T\in\mathcal{T}$ there exist $S\in\mathcal{S}_{\mathcal{T}}$ and $c\in G$ such that $Sc=T$. Every element in $\mathcal{S}_{\mathcal{T}}$ is called a \textbf{shape} of $\mathcal{T}$. For every shape $S\in\mathcal{S}_{\mathcal{T}}$ the \textbf{center} of $S$ is defined by $$C(S)=\{c\in G:Sc\in\mathcal{T}\}\subset G.$$

The \textbf{translation} of a tiling $\mathcal{T}$ by $g\in G$ is $$\mathcal{T}g=\{Tg:T\in\mathcal{T}\},$$ which is also a tiling of $G$. For $F\in\mathcal{F}(G)$ we set $$\mathcal{T}|_{F}=\{T\cap F:T\in\mathcal{T}\}.$$ A finite tiling $\mathcal{T}$ of $G$ is called \textbf{syndetic} if for every shape $S\in\mathcal{S}_{\mathcal{T}}$ the center $C(S)$ is syndetic. A sequence $\{\mathcal{T}_{k}\}_{k=1}^{\infty}$ of finite tilings of $G$ is called \textbf{primely congruent} if for every $k\ge1$, $\mathcal{T}_{k}$ is a refinement of $\mathcal{T}_{k+1}$ (i.e. every $\mathcal{T}_{k+1}$-tile is a union of some $\mathcal{T}_{k}$-tiles) and each shape of $\mathcal{T}_{k+1}$ is partitioned by shapes of $\mathcal{T}_{k}$ in a unique way (i.e. for any two $\mathcal{T}_{k+1}$-tiles $Sc_{1}$ and $Sc_{2}$ of the same shape $S\in\mathcal{S}_{\mathcal{T}_{k+1}}$ we have ${\mathcal{T}_{k}}|_{Sc_{1}}=({\mathcal{T}_{k}}|_{Sc_{2}})c_{2}^{-1}c_{1}$).

\medskip

We list some propositions of tilings as follows, which are going to be used in our main proof. Some of these propositions may be found in \cite{Dou}. Here we reproduce their proofs for completeness.

\begin{proposition}\label{bigproportion}
Suppose that $\mathcal{T}$ is a finite tiling of $G$. Then for any $\epsilon>0$ there exist $K\in\mathcal{F}(G)$ and $\delta>0$ such that for each $g\in G$ and each $(K,\delta)$-invariant $F\in\mathcal{F}(G)$, the union of those $\mathcal{T}g$-tiles which are contained in $F$ has proportion larger than $1-\epsilon$, namely
$$\frac{|\bigcup_{T\in\mathcal{T}g,T\subset F}T|}{|F|}>1-\epsilon.$$
\end{proposition}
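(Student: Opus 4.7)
The plan is to identify the ``boundary'' tiles of $\mathcal{T}g$ with respect to $F$ and to show, via the invariance condition, that together they cover only a small fraction of $F$.

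My first step is to choose $K$ large enough to contain every shape: set $K = \bigcup_{S\in\mathcal{S}_{\mathcal{T}}} S$, which is a finite subset of $G$ because $\mathcal{T}$ is a finite tiling. Then I will pick $\delta>0$ with $\delta\cdot|K|<\epsilon$.

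Next, given any $g\in G$ and any $(K,\delta)$-invariant $F$, I introduce the ``bad'' part $\partial F = F\setminus\bigcup_{T\in\mathcal{T}g,\,T\subset F}T$ and aim to bound $|\partial F|$. The key observation is that every $x\in\partial F$ lies in a unique $\mathcal{T}g$-tile, which must have the form $Scg$ for some $S\in\mathcal{S}_{\mathcal{T}}$ and $c\in C(S)$, and this tile must meet both $F$ and $G\setminus F$ (otherwise it is either fully contained in $F$, contradicting $x\in\partial F$, or disjoint from $F$, contradicting $x\in F$). Since $S\subset K$, the right translate $Kcg$ contains $Scg$, hence also meets $F$ and $G\setminus F$, which means exactly $cg\in B(F,K)$. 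Moreover $x\in Scg\subset Kcg$. Therefore
\[\partial F \;\subset\; \bigcup_{h\in B(F,K)} Kh,\]
whose cardinality is at most $|K|\cdot|B(F,K)| < |K|\cdot\delta\cdot|F| < \epsilon|F|$. This yields exactly the desired proportion $|\bigcup_{T\in\mathcal{T}g,\,T\subset F}T|/|F|>1-\epsilon$.

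The only mildly subtle point is the choice of $K$ as the union of all shapes: it is what allows us to translate a boundary in the tile partition $\mathcal{T}g$ into an element of the set-theoretic boundary $B(F,K)$ appearing in the definition of $(K,\delta)$-invariance. Once this translation is in place, the remainder is a direct counting estimate, and I do not expect any serious obstacle.
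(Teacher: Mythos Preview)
Your proof is correct and follows essentially the same approach as the paper: both take $K=\bigcup_{S\in\mathcal{S}_{\mathcal{T}}}S$ and $\delta=\epsilon/|K|$, and both establish the inclusion $\partial F\subset KB(F,K)$, from which the estimate $|\partial F|\le|K|\cdot|B(F,K)|<\epsilon|F|$ follows. The only cosmetic difference is that you argue this inclusion directly (showing each $x\in\partial F$ lies in $Kcg$ with $cg\in B(F,K)$), whereas the paper proves the contrapositive $F\setminus KB(F,K)\subset\bigcup_{T\in\mathcal{T}g,\,T\subset F}T$.
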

\begin{proof}
We assume that $\mathcal{S}_{\mathcal{T}}$ is a set of shapes of $\mathcal{T}$. Put
$$K=\bigcup_{S\in\mathcal{S}_{\mathcal{T}}}S.$$
For any $F\in\mathcal{F}(G)$ and $g\in G$,
$$\bigcup_{T\in\mathcal{T}g,T\subset F}T=\bigcup_{S\in\mathcal{S}_{\mathcal{T}},Sc\in\mathcal{T},Scg\subset F}Scg.$$
Set
$$\delta=\frac{\epsilon}{|K|}.$$
We claim that for any $(K,\delta)$-invariant $F\in\mathcal{F}(G)$,
$$F\setminus KB(F,K)\subset\bigcup_{T\in\mathcal{T}g,T\subset F}T.$$
In fact, if we take $h\in F\setminus KB(F,K)$, then by the definition of $B(F,K)$ we see that $KK^{-1}h\subset F$. Since $\mathcal{T}g$ is a tiling of $G$, we have $h\in Scg$ for some $S\in\mathcal{S}_{\mathcal{T}}$ and some $c\in C(S)$. This implies $cgh^{-1}\in S^{-1}$. It follows that $Scg=S(cgh^{-1})h\subset SS^{-1}h\subset KK^{-1}h\subset F$. So we get $h\in Scg\subset F$. Therefore $h\in\bigcup_{T\in\mathcal{T}g,T\subset F}T$. This proves our claim. Thus, by this claim we deduce
$$\Big|\bigcup_{T\in\mathcal{T}g,T\subset F}T\Big|\ge|F\setminus KB(F,K)|\ge|F|-|K|\cdot|B(F,K)|>(1-\epsilon)|F|.$$
\end{proof}

\begin{proposition}\label{manytiles}
Suppose that $\mathcal{T}$ is a syndetic finite tiling of $G$ and $\mathcal{S}_{\mathcal{T}}$ is a set of shapes of $\mathcal{T}$. Then for any $n\in\mathbb{N}$ there exist $K\in\mathcal{F}(G)$ and $\epsilon>0$ such that for every $S\in\mathcal{S}_{\mathcal{T}}$ and every $(K,\epsilon)$-invariant $F\in\mathcal{F}(G)$, $F$ contains at least $n$ $\mathcal{T}$-tiles of the shape $S$.
\end{proposition}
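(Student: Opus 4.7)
The plan is to exploit the syndeticity of each center $C(S)$ directly and combine it with a boundary estimate coming from invariance, using finiteness of $\mathcal{S}_{\mathcal{T}}$ to make everything uniform. For each shape $S\in\mathcal{S}_{\mathcal{T}}$ fix $F_{S}\in\mathcal{F}(G)$ witnessing $G=F_{S}C(S)$, and set $M:=\max_{S\in\mathcal{S}_{\mathcal{T}}}|F_{S}|$ and $L:=\max_{S\in\mathcal{S}_{\mathcal{T}}}|S|$. A standard pigeonhole step shows that for \emph{any} finite $F\subset G$,
$$|C(S)\cap F_{S}^{-1}F|\geq |F|/|F_{S}|,$$
because writing each $g\in F$ as $g=fc$ with $f\in F_{S}$, $c\in C(S)$ places $c$ in $F_{S}^{-1}F\cap C(S)$, and each such $c$ is hit by at most $|F_{S}|$ choices of $g\in F_{S}c$.

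Next I arrange $K$ to be symmetric and to contain $e$, so that the paper's $B(F,K)$-invariance produces a triangle-difference bound on a single $F$. Indeed, if $g\in KF\setminus F$, write $g=kh$ with $h\in F$; then $k^{-1}\in K$ gives $k^{-1}g=h\in Kg\cap F$, while $e\in K$ gives $g\in Kg\cap(G\setminus F)$, so $KF\setminus F\subset B(F,K)$ and hence $|KF\triangle F|=|KF\setminus F|<\epsilon|F|$. I also force $K$ to contain $\bigcup_{S}(S\cup S^{-1}\cup F_{S}\cup F_{S}^{-1})$, so in particular $|F_{S}^{-1}F\setminus F|\leq \epsilon|F|$ and $|SF\setminus F|\leq\epsilon|F|$ for every shape. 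The first estimate upgrades the density bound to $|C(S)\cap F|\geq (1/|F_{S}|-\epsilon)|F|$. For the second, if $c\in F$ satisfies $Sc\not\subset F$, pick the first $s\in S$ (in some fixed order) with $sc\notin F$; the map $c\mapsto(s,sc)$ is an injection into $S\times(SF\setminus F)$, so $|\{c\in F:Sc\not\subset F\}|\leq L\cdot\epsilon|F|$. Subtracting, the count $N_{S}$ of $\mathcal{T}$-tiles of shape $S$ contained in $F$ obeys
$$N_{S}\geq |F|\bigl(1/|F_{S}|-(L+1)\epsilon\bigr)\geq |F|\bigl(1/M-(L+1)\epsilon\bigr).$$

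Choosing $\epsilon<1/(2M(L+1))$ makes the right-hand side at least $|F|/(2M)$, so it remains to force $|F|\geq 2Mn$. This follows from the invariance itself: whenever $e\in K$ and $|F|<|K|$, each $g\in F$ satisfies $|Kg|=|K|>|F|$, so $Kg\not\subset F$ while $g\in Kg\cap F$, putting every $g\in F$ into $B(F,K)$; this is incompatible with $|B(F,K)|/|F|<\epsilon<1$. I therefore close the argument by enlarging $K$, still symmetric and containing $\{e\}\cup\bigcup_{S\in\mathcal{S}_{\mathcal{T}}}(S\cup S^{-1}\cup F_{S}\cup F_{S}^{-1})$, so that $|K|\geq 2Mn$; then any $(K,\epsilon)$-invariant $F$ automatically satisfies $N_{S}\geq n$ for every $S\in\mathcal{S}_{\mathcal{T}}$.

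The main obstacle I anticipate is not depth but bookkeeping: translating the paper's $B(F,K)$-based invariance into the triangle-difference form I actually use on a single $F$, and keeping $\epsilon$, $M$, $L$, and the padding of $K$ coordinated uniformly across all shapes. The symmetry-plus-identity choice of $K$ handles both issues simultaneously, after which the argument reduces to the pigeonhole estimate above.
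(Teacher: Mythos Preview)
Your proof is correct, but it follows a genuinely different route from the paper's.

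The paper argues in two stages. First it proves the case $n=1$: with $K'=RTT^{-1}R^{-1}$ (where $R$ is a common syndeticity witness and $T=\bigcup_{S}S$) and any $\epsilon'<1$, a $(K',\epsilon')$-invariant $F$ must contain some $g$ with $K'g\subset F$, and that single translate already contains a tile of every shape. For general $n$, the paper takes one $(K',\epsilon')$-invariant set $A$, picks $g_{1},\dots,g_{n}$ so that $Ag_{1},\dots,Ag_{n}$ are disjoint, and sets $K=\bigcup_{j}Ag_{j}$; any $(K,\epsilon)$-invariant $F$ then contains a translate of $K$, hence $n$ disjoint translates of $A$, each housing a tile of every shape.

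Your argument is instead a density count: you lower-bound $|C(S)\cap F|$ by pigeonhole from $G=F_{S}C(S)$, upper-bound the number of centers $c\in F$ with $Sc\not\subset F$ via the injection $c\mapsto(s,sc)$ into $S\times(SF\setminus F)$, and subtract. This yields the quantitative estimate $N_{S}\ge(1/M-(L+1)\epsilon)|F|$, which is strictly stronger information than the paper obtains: you get that the number of tiles of each shape grows linearly in $|F|$, not merely that it exceeds $n$. The paper's proof, on the other hand, is shorter and more geometric, avoiding the bookkeeping with $M$, $L$, and the symmetric padding of $K$; it never needs to estimate $|C(S)\cap F|$ at all. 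Both approaches ultimately rely on the same elementary fact (if $e\in K$ and $\epsilon<1$ then a $(K,\epsilon)$-invariant $F$ satisfies $|F|\ge|K|$), which you use to force $|F|\ge 2Mn$ and the paper uses to locate a single interior point.
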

\begin{proof}
Without loss of generality, we assume that every shape $S\in\mathcal{S}_{\mathcal{T}}$ contains the identity element $e$ of $G$ (replacing $S$ by $Ss^{-1}$ for some $s\in S$ if necessary).

We claim that there exist $K^{\prime}\in\mathcal{F}(G)$ and $\epsilon^{\prime}>0$ such that every $(K^{\prime},\epsilon^{\prime})$-invariant finite subset of $G$ contains a $\mathcal{T}$-tile of the shape $S$ for each $S\in\mathcal{S}_{\mathcal{T}}$. In fact, since $\mathcal{S}_{\mathcal{T}}$ is a finite set, there exists $R\in\mathcal{F}(G)$ with $e\in R$, which does not depend on $S$, such that $RC(S)=G$, and therefore $RSC(S)=G$, for all $S\in\mathcal{S}_{\mathcal{T}}$. Set $T=\bigcup_{\mathcal{S}_{\mathcal{T}}}S$. Let $0<\epsilon^{\prime}<1$ and $K^{\prime}=RTT^{-1}R^{-1}$. Since $e\in K^{\prime}$, for any $(K^{\prime},\epsilon^{\prime})$-invariant $F\in\mathcal{F}(G)$ there is $g\in F$ with $K^{\prime}g\subset F$. Thus, for any $S\in\mathcal{S}_{\mathcal{T}}$ we have $g\in RSc$ for some $c\in C(S)$, and hence
$$Sc\subset RSc\subset RSS^{-1}R^{-1}g\subset K^{\prime}g\subset F.$$ This shows the claim.

Now let us fix $n\in\mathbb{N}$. We take $A\in\mathcal{F}(G)$ which is $(K^{\prime},\epsilon^{\prime})$-invariant and choose $g_1,g_2,\dots,g_n\in G$ such that $Ag_{1},Ag_{2},\dots,Ag_{n}$ are pairwise disjoint. Let $K=\bigcup_{j=1}^{n}Ag_{j}$ and $0<\epsilon<1$. We may assume that $K$ contains the identity element of $G$. Then for any $(K,\epsilon)$-invariant $F\in\mathcal{F}(G)$ there exists some $g\in F$ such that $Kg\subset F$, and hence $Ag_{j}g\subset F$ for all $1\le j\le n$. Since $A$ is $(K^{\prime},\epsilon^{\prime})$-invariant, we have for every $1\le j\le n$ that $Ag_{j}g$ is $(K^{\prime},\epsilon^{\prime})$-invariant as well, and hence contains a $\mathcal{T}$-tile of the shape $S$ for each $S\in\mathcal{S}_{\mathcal{T}}$. Thus, $F$ contains at least $n$ $\mathcal{T}$-tiles of the shape $S$ for every $S\in\mathcal{S}_{\mathcal{T}}$.
\end{proof}

\medskip

\subsection{Topological dimension and mean dimension}
Let $X$ be a compact metric space, $\rho$ a metric on $X$, and $P$ a polyhedron.
For $\epsilon>0$, a continuous mapping $f:X\to P$ is called an \textbf{$\epsilon$-embedding} with respect to $\rho$ if $f(x)=f(y)$ implies $\rho(x,y)<\epsilon$, for all $x,y\in X$. Let $\Widim_{\epsilon}(X,\rho)$ be the minimum dimension of a polyhedron $P$ such that there is an $\epsilon$-embedding $f:X\to P$. Recall that the \textbf{topological dimension} of $X$ may be recovered by $$\dim(X)=\lim_{\epsilon\to0}\Widim_{\epsilon}(X,\rho).$$

\medskip

Let $K$ be a compact metric space with a metric $d$. For every $n\in\mathbb{N}$ we equip the space $K^n$ with the product topology and define a compatible metric $d_{l^{\infty}}$ on $K^{n}$ by
\begin{equation}\label{infinitedistance}
d_{l^{\infty}}\left((x_{1},x_{2},\dots,x_{n}),(y_{1},y_{2},\dots,y_{n})\right)=\max_{1\le i\le n}d(x_{i},y_{i}).
\end{equation}
We include here a practical theorem.
\begin{theorem}[{\cite[Lemma 3.2]{Lindenstrauss--Weiss}}]\label{widimcube}
For any $0<\epsilon<1$ and any $n\in\mathbb{N}$ we have
$$\Widim_{\epsilon}\left([0,1]^{n},d_{l^{\infty}}\right)=n.$$
In particular, $\dim\left([0,1]^{n}\right)=n$.
\end{theorem}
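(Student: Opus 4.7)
The plan is to prove the two inequalities $\Widim_\epsilon([0,1]^n, d_{l^\infty}) \le n$ and $\Widim_\epsilon([0,1]^n, d_{l^\infty}) \ge n$ separately. The first is immediate; the second carries all the topological content, and I would handle it by pulling back an essential family from $[0,1]^n$ into the target polyhedron.

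For the upper bound, the identity map $\mathrm{id}_{[0,1]^n}$ is trivially an $\epsilon$-embedding (since $\mathrm{id}(x)=\mathrm{id}(y)$ forces $x=y$), and $[0,1]^n$ is itself a polyhedron of dimension $n$; hence $\Widim_\epsilon([0,1]^n, d_{l^\infty}) \le n$. The ``in particular'' clause $\dim([0,1]^n)=n$ then falls out of the definition of $\dim$ recalled just above the theorem by letting $\epsilon \to 0$.

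For the lower bound I would argue by contradiction: suppose $f : [0,1]^n \to P$ is an $\epsilon$-embedding into a polyhedron $P$ with $\dim P \le n-1$. For $i=1,\dots,n$ set $F_i^- = \{x \in [0,1]^n : x_i = 0\}$ and $F_i^+ = \{x \in [0,1]^n : x_i = 1\}$. Since $d_{l^\infty}(x,y) \ge 1 > \epsilon$ whenever $x \in F_i^-$ and $y \in F_i^+$, the $\epsilon$-embedding property forces $f(F_i^-) \cap f(F_i^+) = \emptyset$ for each $i$. The classical input I would quote here is the Eilenberg--Otto theorem from dimension theory (see \cite{Hurewicz--Wallman}): the $n$ pairs $(F_i^-, F_i^+)_{i=1}^n$ form an \emph{essential family} in $[0,1]^n$, meaning that for every choice of closed separators $C_i$ between $F_i^-$ and $F_i^+$ one has $\bigcap_{i=1}^n C_i \ne \emptyset$; and conversely any space admitting an essential family of $n$ pairs of disjoint closed sets has dimension at least $n$. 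Using Urysohn's lemma in $P$, pick closed separators $C_i' \subset P$ of $f(F_i^-)$ from $f(F_i^+)$; then $f^{-1}(C_i')$ is a closed separator of $F_i^-$ from $F_i^+$ in $[0,1]^n$, so essentiality gives a point $x \in \bigcap_i f^{-1}(C_i')$, whence $f(x) \in \bigcap_i C_i'$. This exhibits $(f(F_i^-), f(F_i^+))_{i=1}^n$ as an essential family of size $n$ in $P$, forcing $\dim P \ge n$ and contradicting $\dim P \le n-1$.

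The hard part, should one want a fully self-contained proof, is showing that the opposite-face pairs in $[0,1]^n$ really are essential, which is where all the topology lives. The standard route is via Brouwer's fixed-point theorem (equivalently the nontriviality of the identity in $\pi_{n-1}(S^{n-1})$): if the separators $C_i$ had empty intersection, one could glue the two complementary components of each $[0,1]^n \setminus C_i$ into coordinate functions and manufacture a self-map of $[0,1]^n$ that preserves each face but misses some interior point, after which retracting onto $\partial[0,1]^n$ would violate the no-retraction theorem. Granted that black-box fact, the application to $\Widim_\epsilon$ is the clean separator-pullback bookkeeping carried out above.
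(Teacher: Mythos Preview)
The paper does not give its own proof of this theorem: it is quoted verbatim as \cite[Lemma 3.2]{Lindenstrauss--Weiss} and used as a black box. So there is nothing in the paper to compare your argument against line by line.

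That said, your proposal is correct and is essentially the classical route. The upper bound is trivial as you note. For the lower bound, the essential-family / Eilenberg--Otto argument is exactly the right mechanism, and your separator pullback is sound: since $f$ is an $\epsilon$-embedding with $\epsilon<1$, images of opposite faces are disjoint closed sets in $P$; any partitions between them pull back to partitions between the faces in $[0,1]^n$, and essentiality of the face pairs (equivalent to Brouwer / no-retraction) forces a common point, hence the pulled-back partitions were already essential in $P$, giving $\dim P\ge n$. One cosmetic remark: the phrase ``using Urysohn's lemma \dots\ pick closed separators $C_i'$'' should really read ``let $C_i'$ be arbitrary closed separators''; you are verifying essentiality, so the separators must be generic, and Urysohn is not what produces them (normality alone suffices to know at least one separator exists, but that is not the point). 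The original Lindenstrauss--Weiss argument runs through the same circle of ideas, phrased via the Lebesgue covering lemma for $[0,1]^n$ rather than the Eilenberg--Otto characterization; the two formulations are equivalent ways of encoding the topological nontriviality of the cube, so your approach is not materially different from theirs.
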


\medskip

Let $(X,G)$ be a $G$-action and $d$ a metric on $X$. For $F\in\mathcal{F}(G)$ and $x,y\in X$ we set
$$d_{F}(x,y)=\max_{g\in F}d(gx,gy).$$
The \textbf{mean dimension} of $(X,G)$ is defined by
$$\mdim(X,G)=\lim_{\epsilon\to0}\lim_{n\to\infty}\frac{\Widim_{\epsilon}(X,d_{F_{n}})}{|F_{n}|},$$
where $\{F_{n}\}_{n=1}^{\infty}$ is a F\o lner sequence of $G$.
It is well known that the limit in the above definition always exists\footnote{The existence of the inner limit is due to the Ornstein--Weiss theorem (see \cite[Theorem 6.1]{Lindenstrauss--Weiss}). The outer limit exists because $\Widim_{\epsilon}(X,d_{F_{n}})$ is monotone with respect to $\epsilon$.}, and the value $\mdim(X,G)$ is independent of the choice of a F\o lner sequence of $G$.

\bigskip

\section{A constructive proof of Theorem \ref{maintheorem}}\label{sec:maintheorem}
The proof of Theorem \ref{maintheorem} consists of four parts. Part 1 is dedicated to the construction, while Parts 2,3,4 are devoted to the argument that the $G$-action we constructed satisfies all the required conditions. The title of each part indicates the precise aim of the part.

\medskip

Let us start with necessary settings. We denote by $D_3$ the discrete space consisting of three points and by $P$ the cone of $D_3$, namely,
\begin{equation*}
P=([0,1]\times D_3)/\sim,
\end{equation*}
where $(0,a)\sim(0,b)$ for all $a,b\in D_3$. Obviously, $\dim(P)=1$. Throughout this section, we let $d$ be the graph distance on $P$ with all three edges having length one and $d_{l^\infty}$ the metric on $P^n$ defined by \eqref{infinitedistance} for $n\in\mathbb{N}$. We include a topological embedding result as follows.
\begin{theorem}[{\cite[Proposition 2.5]{Lindenstrauss--Tsukamoto}}]\label{epsilonembedding}
For every $\epsilon\in(0,1)$, there does not exist an $\epsilon$-embedding of $(P^n,d_{l^\infty})$ into $\mathbb{R}^{2n-1}$ for any $n\in\mathbb{N}$.
\end{theorem}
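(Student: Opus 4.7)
The plan is induction on $n$, with the base case handled by an intermediate value argument and the inductive step reduced to a topological obstruction of Van Kampen--Flores type at the central point of $P^n$.

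Base case ($n=1$). Let $f:P\to\mathbb{R}$ be continuous. Denote the cone point by $o$, the three endpoints by $a_1,a_2,a_3$, and the three closed edges by $e_1,e_2,e_3$, so that $d(o,a_i)=1$ and $d(a_i,a_j)=2$ for $i\ne j$. After relabeling we may assume $f(a_1)\le f(a_2)\le f(a_3)$, and we distinguish cases according to the position of $f(o)$ among these three values. In each case, applying the intermediate value theorem to the restriction of $f$ to a suitable edge produces $x$ on that edge and $y$ on a different edge with $f(x)=f(y)$ and $d(x,y)\ge 1$; for instance, if $f(o)\le f(a_1)$ then IVT on $e_3$ gives $x\in e_3$ with $f(x)=f(a_1)$, and we may take $y=a_1\in e_1$. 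Since $d(x,y)\ge 1>\epsilon$, the map $f$ is not an $\epsilon$-embedding.

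Inductive step. The idea is to localize at the central point $\mathbf{o}=(o,\dots,o)\in P^n$. Since $P$ is the cone on $D_3$ and a product of cones is PL-homeomorphic to the cone on the join of the constituent links, a PL neighborhood of $\mathbf{o}$ in $P^n$ is the cone on the iterated join $D_3^{*n}$. By the Van Kampen--Flores theorem---specifically, Sarkaria's generalization to joins of finite sets, which recovers the non-planarity of $K_{3,3}=D_3*D_3$ when $n=2$---the complex $D_3^{*n}$ admits no topological embedding into $\mathbb{R}^{2n-2}$. Hence if $f$ were a genuine PL embedding of $P^n$ into $\mathbb{R}^{2n-1}$, taking the PL link of $\mathbf{o}$ on both sides would realize $D_3^{*n}$ as a subcomplex of the $(2n-2)$-sphere serving as the link of $f(\mathbf{o})$ in $\mathbb{R}^{2n-1}$, contradicting Van Kampen--Flores.

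The principal difficulty is upgrading this obstruction from honest embeddings to $\epsilon$-embeddings. The key leverage is that $\epsilon<1$ forces $f$ to distinguish any two points of $P^n$ whose coordinatewise $d$-distance exceeds $1$ in some coordinate, and in particular forces the three endpoints of every $P$-factor to have distinct $f$-images. One then fixes a fine triangulation $\mathcal{K}$ of $P^n$ of mesh strictly smaller than $1-\epsilon$ and containing the natural link complex of $\mathbf{o}$, and replaces $f$ by a PL approximation $\tilde f$ close enough in the sup norm to inherit the injectivity of $f$ on the star of that link. Applying Van Kampen--Flores to $\tilde f$ restricted to this link then yields the contradiction. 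Carrying out this perturbation so that it genuinely preserves the combinatorial injectivity required by the obstruction---essentially a quantitative transversality step on the scale $1-\epsilon$---is the most delicate part of the argument.
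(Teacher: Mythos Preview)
The paper does not supply a proof of this theorem; it is quoted verbatim from \cite{Lindenstrauss--Tsukamoto} and used as a black box. So there is no ``paper's own proof'' to compare against, and your proposal must be judged on its own merits.

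Your base case is fine (a short case analysis with the intermediate value theorem does the job for $n=1$). The serious problem is the inductive step. First, it is not actually inductive: you never invoke the hypothesis for $n-1$, so the framing is misleading. More importantly, the argument you sketch establishes only that $P^n$ admits no \emph{honest} (PL or topological) embedding into $\mathbb{R}^{2n-1}$, via the link of $\mathbf{o}$ and the Van Kampen--Flores obstruction for $D_3^{*n}$. That part is correct, but it is not what is being asked. The entire content of the theorem is the upgrade from ``no embedding'' to ``no $\epsilon$-embedding'', and you explicitly leave that step undone (``the most delicate part of the argument'').

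Your proposed bridge---PL-approximate $f$ and take a link at $f(\mathbf{o})$---does not work as stated. An $\epsilon$-embedding has no local injectivity whatsoever: it may collapse an entire neighbourhood of $\mathbf{o}$ of diameter $<\epsilon$ to a single point, so there is no well-defined link of the image, and a PL approximation of such a map is still far from an embedding on any star of $\mathbf{o}$. Nothing in the hypothesis $\epsilon<1$ prevents this. The standard way to make an $\epsilon$-embedding obstruction rigorous (and the route taken in \cite{Lindenstrauss--Tsukamoto}) is global rather than local: one passes to the $\mathbb{Z}_2$-space $\{(x,y)\in P^n\times P^n: d_{l^\infty}(x,y)\ge\epsilon\}$, observes that an $\epsilon$-embedding into $\mathbb{R}^{2n-1}$ yields a $\mathbb{Z}_2$-equivariant map from this space to $S^{2n-2}$, and then computes a cohomological index to rule this out. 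Your link argument does not substitute for this computation, and the ``quantitative transversality'' you allude to is not a recognized technique that would close the gap.
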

We make use of a recent result on tilings of amenable groups.
\begin{theorem}[{\cite[Theorem 5.2]{DHZ}},{\cite[Theorem 3.6]{Dou}}]\label{tiling}
Let $G$ be an infinite countable amenable group with the identity element $e$, $\{T_k\}_{k=1}^\infty\subset\mathcal{F}(G)$ an increasing sequence with $\bigcup_{k=1}^{\infty}T_k=G$, and $\{\epsilon_k\}_{k=1}^\infty$ a decreasing sequence of positive numbers converging to zero. Then there exists a primely congruent sequence $\{\mathcal{T}_k\}_{k=1}^\infty$ of syndetic\footnote{The term ``syndetic'' here corresponds to the term ``irreducible'' in \cite{Dou}.} finite tilings of $G$ satisfying the following conditions:
\begin{enumerate}
\item $e\in S_{1,1}\subset S_{2,1}\subset\cdots\subset S_{k,1}\subset\cdots\subset\bigcup_{k=1}^{\infty}S_{k,1}=G$;
\item for every $k\in\mathbb{N}$ and every $1\le i\le m_k$, $S_{k,i}$ is $(T_k,\epsilon_k)$-invariant;
\end{enumerate}
where for each $k\in\mathbb{N}$, $\{S_{k,i}:1\le i\le m_k\}$ is the set of all shapes of $\mathcal{T}_k$.
\end{theorem}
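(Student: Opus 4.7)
My plan is an inductive construction of the sequence $\{\mathcal{T}_{k}\}_{k=1}^{\infty}$, where the engine at each stage is the Ornstein--Weiss $\epsilon$-quasi-tiling theorem for amenable groups together with a procedure for promoting a quasi-tiling to an exact tiling. Conditions (1) and (2), together with prime congruence and syndeticity of centers, will be enforced simultaneously inside the induction.

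For the base case I would apply the Ornstein--Weiss quasi-tiling theorem to produce finitely many $(T_{1},\epsilon_{1})$-invariant sets $S_{1,1},\ldots,S_{1,m_{1}}$, each normalized to contain $e$, whose translates form an $\epsilon$-quasi-tiling of $G$ for any prescribed small $\epsilon$; $S_{1,1}$ is designated as the distinguished shape containing $e$. An exact tiling $\mathcal{T}_{1}$ with this shape list is then extracted by a maximal-disjoint-family / compactness argument inside the compact $G$-action consisting of all partial tilings of $G$ labelled by $\mathcal{S}_{\mathcal{T}_{1}}$; Ornstein--Weiss positive lower Banach density of each shape upgrades to syndeticity of the center sets $C(S_{1,i})$ once the tiling is exact.

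For the inductive step, assume $\mathcal{T}_{k}$ is built. I would first enlarge $T_{k+1}$ to a finite set $\widetilde{T}_{k+1}$ big enough that, by Proposition \ref{bigproportion}, every sufficiently invariant $F\in\mathcal{F}(G)$ contains the union of its \emph{entire} interior $\mathcal{T}_{k}$-tiles with proportion $>1-\epsilon_{k+1}/2$; trimming $F$ to this union gives $F'$, which remains $(T_{k+1},\epsilon_{k+1})$-invariant and is by construction a union of $\mathcal{T}_{k}$-tiles. Apply the quasi-tiling theorem to finitely many such $F'$ and promote to an exact tiling $\mathcal{T}_{k+1}$ by the same maximal-family argument, now carried out in the compact space of exact refinements of $\mathcal{T}_{k}$ using the chosen decorated shapes. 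Declaring two $\mathcal{T}_{k+1}$-tiles to share a shape only when their induced $\mathcal{T}_{k}$-decorations agree under translation makes prime congruence automatic; the shape set stays finite because only finitely many $\mathcal{S}_{\mathcal{T}_{k}}$-patterns fit a given $F'$. To secure the chain $e\in S_{1,1}\subset S_{2,1}\subset\cdots$ with $\bigcup_{k}S_{k,1}=G$, I would fix an exhaustion $G=\{g_{n}\}_{n\geq 1}$ and, at step $k{+}1$, enlarge the invariance requirement enough so that one of the available Ornstein--Weiss shapes swallows $S_{k,1}\cup\{g_{k+1}\}$ as a union of $\mathcal{T}_{k}$-tiles, and designate that shape as $S_{k+1,1}$.

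The main obstacle is the simultaneous management of four features in one tiling at each step: exactness (rather than merely almost), refinement of $\mathcal{T}_{k}$, the nesting $S_{k,1}\subset S_{k+1,1}$ covering $G$, and syndeticity of every center. I expect to handle this along the lines of Downarowicz--Huczek--Zhang \cite{DHZ}, extended by \cite{Dou} to include syndeticity: one works in the compact $G$-space of all exact tilings of $G$ by the prescribed list of decorated shapes, verifies that it is non-empty (quasi-tiling plus a correction layer), and passes to a minimal subsystem to obtain a tiling supported on a uniformly recurrent point. Syndeticity of centers then follows from minimality, while the other conditions are built into the choice of shape list, yielding the desired primely congruent sequence $\{\mathcal{T}_{k}\}_{k=1}^{\infty}$.
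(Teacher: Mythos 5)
The paper offers no proof of this theorem: it is imported verbatim from Downarowicz--Huczek--Zhang \cite{DHZ} and Dou \cite{Dou}, so there is no internal argument to compare yours against. Your sketch does reproduce the correct architecture of those papers --- Ornstein--Weiss quasi-tilings as the source of invariant shapes, decoration of each new shape by its induced $\mathcal{T}_k$-pattern to force prime congruence, and passage to a minimal subsystem of the compact $G$-space of tilings to upgrade to syndetic centers (this last step is exactly Dou's contribution, and your remark that positive lower Banach density of each center set yields syndeticity is sound, since positive lower Banach density does imply syndeticity).

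The genuine gap is the step you dispose of as ``a maximal-disjoint-family / compactness argument'' promoting a quasi-tiling to an exact tiling. That step is the entire content of \cite{DHZ}, and it does not follow from maximality plus compactness. An Ornstein--Weiss $\epsilon$-quasi-tiling covers only a $(1-\epsilon)$-proportion of the group, and a maximal disjoint subfamily of its tiles still leaves an uncovered remainder of positive density; to obtain an \emph{exact} tiling one must absorb that remainder into neighbouring tiles, which deforms the shapes. The crux --- nowhere addressed in your proposal --- is to show that this absorption can be organized so that (a) only \emph{finitely} many distinct deformed shapes arise, and (b) each deformed shape remains $(T_k,\epsilon_k)$-invariant. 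Without control of (a) the tiling is not finite and the notion of shape/center collapses; without (b) condition (2) fails. The DHZ proof achieves this by an inductive multi-scale construction using a congruent system of quasi-tilings, not by extracting a maximal family. A second, smaller gap: after passing to a minimal subsystem to gain syndeticity, you must verify that the distinguished nested shapes $S_{k,1}$ with $e\in S_{1,1}\subset S_{2,1}\subset\cdots$ and the prime-congruence relations survive in the limit tiling; this requires carrying the whole congruent sequence into a single compact $G$-space and taking one minimal subsystem for the inverse system, rather than minimalizing stage by stage, since separate minimalizations at each $k$ need not be compatible refinements of one another.
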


\medskip

Let $G=\{g_k:k\in\mathbb{N}\}$ be an infinite countable discrete amenable group whose identity element is denoted by $e$. Take a decreasing sequence $\{\eta_n\}_{n=1}^\infty$ of positive numbers converging to zero and an increasing sequence $\{A_n\}_{n=1}^\infty\subset\mathcal{F}(G)$ with $\bigcup_{n=1}^{\infty}A_n=G$. By Theorem \ref{tiling}, there exists a primely congruent sequence $\{\mathcal{T}_n\}_{n=1}^\infty$ of syndetic finite tilings of $G$ with the sets of shapes $\mathcal{S}_{\mathcal{T}_n}=\{S_{n,i}:1\le i\le m_n\}$ satisfying that
$$e\in S_{1,1}\subset S_{2,1}\subset\cdots\subset S_{n,1}\subset\cdots\subset\bigcup_{n=1}^{\infty}S_{n,1}=G,\quad e\in C(S_{n,1}),$$
and that $S_{n,i}$ is $(A_n,\eta_n)$-invariant for every $n\in\mathbb{N}$ and every $1\le i\le m_n$.

\medskip

Without loss of generality, we may assume $d=1$ in the statement of Theorem \ref{maintheorem} (otherwise, we replace $P$ by $P^d$ in our argument). We are going to construct a required $G$-action, which is a subshift of the full $G$-shift on $P^G$. We denote it by $(X,\sigma)$.

\medskip

Let $\rho$ and $\rho^\prime$ be the metrics on $P^G$ and $[0,1]^G$, respectively, defined by \eqref{metricrho}. Let $\{\delta_n\}_{n=1}^\infty$ be a strictly decreasing sequence of positive numbers converging to zero. Take an increasing sequence $\{P_n\}_{n=1}^\infty$ of finite subsets of $P$ such that for each $n\in\mathbb{N}$, $P_n$ is $\delta_n$-dense in $P$. Let $\{F_n\}_{n=1}^\infty\subset\mathcal{F}(G)$ be an increasing sequence with $\bigcup_{n=1}^{\infty}F_n=G$.\footnote{Note that $\{F_n\}_{n=1}^\infty$ will play a role in the proof different from $\{A_n\}_{n=1}^\infty$, although $\{F_n\}_{n=1}^\infty$ could be, of course, the same as $\{A_n\}_{n=1}^\infty$.} We take a symbol $\ast\notin P$ and set $\hat{P}=P\cup\{\ast\}$.

\bigskip

\noindent\textbf{Part 1: Construction of $(X,\sigma)$.}
The construction of $(X,\sigma)$ will be completed by induction.

\medskip

\textbf{Step 1.}
We choose $n_1\in\mathbb{N}$ sufficiently large so that for every $1\le i\le m_{n_1}$ there is $x_{1,i}\in\hat{P}^{S_{n_1,i}}$ with
$$\frac{1+\delta_1}{2}<\frac{|x_{1,i}(S_{n_1,i},\ast)|}{|S_{n_1,i}|}\le\frac{1+\delta_1}{2}+\frac{1}{|S_{n_1,i}|}.$$
Let
$$B_{1,i}=\left\{x=(x_g)_{g\in S_{n_1,i}}\in P^{S_{n_1,i}}:x_g=(x_{1,i})_g,\;\forall g\in S_{n_1,i}\setminus x_{1,i}(S_{n_1,i},\ast)\right\}.$$
We define $x_1\in\hat{P}^G$ by
$${x_1}|_{S_{n_1,i}c}=x_{1,i},\quad\forall1\le i\le m_{n_1},\;\forall c\in C(S_{n_1,i}).$$
We set
$$X_1=\left\{x\in P^G:x|_{S_{n_1,i}c}\in B_{1,i},\;\forall1\le i\le m_{n_1},\forall c\in C(S_{n_1,i})\right\}.$$

\medskip

\textbf{Step 2.}
Applying Proposition \ref{manytiles}, we choose $l_1\in\mathbb{N}$ sufficiently large such that we can find a finite subset $R_1\subset C(S_{n_1,1})$ and $h_1\in C(S_{n_1,1})$ satisfying
$$e\in R_1,\quad h_1\notin R_1,\quad S_{n_1,1}R_1\cup S_{n_1,1}h_1\subset S_{l_1,1},\quad|R_1|=|P_1|^{|x_{1,1}(S_{n_1,1},\ast)|}.$$
We select $w_1\in\hat{P}^{S_{l_1,1}}$ such that Conditions (A.2.1), (A.2.2), (A.2.3) are satisfied:
\begin{enumerate}
\item[(A.2.1)]
${w_1}|_{S_{n_1,1}r\setminus{x_{1,1}(S_{n_1,1},\ast)r}}={x_{1,1}}|_{S_{n_1,1}\setminus{x_{1,1}(S_{n_1,1},\ast)}}$, $\forall r\in R_1$;
\item[(A.2.2)]
${w_1}|_{x_{1,1}(S_{n_1,1},\ast)r}\in P_1^{|x_{1,1}(S_{n_1,1},\ast)|}$ $(r\in R_1)$ are pairwise distinct, i.e.\footnote{Precisely speaking, here (as well as in (A.k.2)) when we compare two ``vectors'', say, ${w_1}|_{x_{1,1}(S_{n_1,1},\ast)}$ and ${w_1}|_{x_{1,1}(S_{n_1,1},\ast)r}$, we agree that their ``coordinates'' correspond synchronously under the right-multiplication with $r$ taken within $R_1$, i.e. ${w_1}|_{x_{1,1}(S_{n_1,1},\ast)}={w_1}|_{x_{1,1}(S_{n_1,1},\ast)r}$ if and only if $(w_1)_g=(w_1)_{gr}\in P_1$ for all $g\in{x_{1,1}(S_{n_1,1},\ast)}$.}
$$\left\{{w_1}|_{x_{1,1}(S_{n_1,1},\ast)r}:r\in R_1\right\}=P_1^{|x_{1,1}(S_{n_1,1},\ast)|};$$
\item[(A.2.3)]
if $S_{n_1,i}c\subset S_{l_1,1}\setminus S_{n_1,1}R_1$ for some $1\le i\le m_{n_1}$ and some $c\in C(S_{n_1,i})$ then
$${w_1}|_{S_{n_1,i}c}=x_{1,i}.$$
\end{enumerate}
Clearly,
$${w_1}|_{S_{n_1,1}h_1}=x_{1,1},\quad{w_1}|_{S_{n_1,1}}\in B_{1,1}\subset P^{S_{n_1,1}}.$$
We pick $n_2\in\mathbb{N}$ sufficiently large such that Conditions (B.2.1), (B.2.2), (B.2.3), (B.2.4) are satisfied:
\begin{enumerate}
\item[(B.2.1)]
$g_1h_1\in S_{n_2,1}$;
\item[(B.2.2)]
$|F_1S_{n_2,1}|<(1+\delta_2)\cdot|S_{n_2,1}|$;
\item[(B.2.3)]
for every $1\le i\le m_{n_2}$, there is $c_{1,i}\in C(S_{l_1,1})$ such that $S_{l_1,1}c_{1,i}\subset S_{n_2,i}$, and moreover, $c_{1,1}=e$;
\item[(B.2.4)]
for every $1\le i\le m_{n_2}$, $|S_{l_1,1}|$ is negligible compared with $|S_{n_2,i}|$, more precisely,
$$\frac{|x_1(S_{n_2,i}\setminus S_{l_1,1}c_{1,i},\ast)|}{|S_{n_2,i}|}>\frac{1+\delta_1}{2},
\quad\forall1\le i\le m_{n_2}.$$
\end{enumerate}
For every $1\le i\le m_{n_2}$ we choose $x_{2,i}\in\hat{P}^{S_{n_2,i}}$ such that Conditions (C.2.1), (C.2.2), (C.2.3) are satisfied:
\begin{enumerate}
\item[(C.2.1)]
${x_{2,i}}|_{S_{l_1,1}c_{1,i}}=w_1$;
\item[(C.2.2)]
if $S_{n_1,j}c\subset S_{n_2,i}\setminus S_{l_1,1}c_{1,i}$ for some $1\le j\le m_{n_1}$ and some $c\in C(S_{n_1,j})$ then
$$(x_{2,i})_{gc}=(x_{1,j})_g,\quad\forall g\in S_{n_1,j}\setminus x_{1,j}(S_{n_1,j},\ast);$$
\item[(C.2.3)]
on the rest of coordinates in $S_{n_2,i}\setminus S_{l_1,1}c_{1,i}$, there are appropriately many $\ast$'s such that
$$\frac{1+\delta_2}{2}<\frac{|x_{2,i}(S_{n_2,i},\ast)|}{|S_{n_2,i}|}\le\frac{1+\delta_2}{2}+\frac{1}{|S_{n_2,i}|}.$$
\end{enumerate}
Let
$$B_{2,i}=\left\{x=(x_g)_{g\in S_{n_2,i}}\in P^{S_{n_2,i}}:x_g=(x_{2,i})_g,\;\forall g\in S_{n_2,i}\setminus x_{2,i}(S_{n_2,i},\ast)\right\}.$$
We define $x_2\in\hat{P}^G$ by
$${x_2}|_{S_{n_2,i}c}=x_{2,i},\quad\forall1\le i\le m_{n_2},\;\forall c\in C(S_{n_2,i}).$$
We set
$$X_2=\left\{x\in P^G:x|_{S_{n_2,i}c}\in B_{2,i},\;\forall1\le i\le m_{n_2},\forall c\in C(S_{n_2,i})\right\}.$$

\bigskip

To proceed, we assume that $x_{k-1,i}$, $B_{k-1,i}$ ($1\le i\le m_{n_{k-1}}$), $x_{k-1}$ and $X_{k-1}$ have been already generated in Step $(k-1)$. Now we generate $x_{k,i}$, $B_{k,i}$ ($1\le i\le m_{n_k}$), $x_k$ and $X_k$ in Step $k$ ($k\ge2$).

\medskip

\textbf{Step k.}
By Proposition \ref{manytiles}, we take $l_{k-1}\in\mathbb{N}$ large enough such that we can find a finite subset $R_{k-1}\subset C(S_{n_{k-1},1})$ and $h_{k-1}\in C(S_{n_{k-1},1})$ satisfying
$$e\in R_{k-1},\quad h_{k-1}\notin R_{k-1},\quad S_{n_{k-1},1}R_{k-1}\cup S_{n_{k-1},1}h_{k-1}\subset S_{l_{k-1},1},$$
$$|R_{k-1}|=|P_{k-1}|^{|x_{k-1,1}(S_{n_{k-1},1},\ast)|}.$$
We select $w_{k-1}\in\hat{P}^{S_{l_{k-1},1}}$ such that Conditions (A.k.1), (A.k.2), (A.k.3) are satisfied:
\begin{enumerate}
\item[(A.k.1)]
${w_{k-1}}|_{S_{n_{k-1},1}r\setminus{x_{k-1,1}(S_{n_{k-1},1},\ast)r}}={x_{k-1,1}}|_{S_{n_{k-1},1}\setminus{x_{k-1,1}(S_{n_{k-1},1},\ast)}}$, $\forall r\in R_{k-1}$;
\item[(A.k.2)]
${w_{k-1}}|_{x_{k-1,1}(S_{n_{k-1},1},\ast)r}\in P_{k-1}^{|x_{k-1,1}(S_{n_{k-1},1},\ast)|}$ $(r\in R_{k-1})$ are pairwise distinct, i.e.
$$\left\{{w_{k-1}}|_{x_{k-1,1}(S_{n_{k-1},1},\ast)r}:r\in R_{k-1}\right\}=P_{k-1}^{|x_{k-1,1}(S_{n_{k-1},1},\ast)|};$$
\item[(A.k.3)]
if $S_{n_{k-1},i}c\subset S_{l_{k-1},1}\setminus S_{n_{k-1},1}R_{k-1}$ for some $1\le i\le m_{n_{k-1}}$ and some $c\in C(S_{n_{k-1},i})$ then
$${w_{k-1}}|_{S_{n_{k-1},i}c}=x_{k-1,i}.$$
\end{enumerate}
Obviously,
$${w_{k-1}}|_{S_{n_{k-1},1}h_{k-1}}=x_{k-1,1},\quad{w_{k-1}}|_{S_{n_{k-1},1}}\in B_{k-1,1}\subset P^{S_{n_{k-1},1}}.$$
We pick $n_k\in\mathbb{N}$ sufficiently large such that Conditions (B.k.1), (B.k.2), (B.k.3), (B.k.4) are satisfied:
\begin{enumerate}
\item[(B.k.1)]
$g_{k-1}h_1h_2\cdots h_{k-1}\in S_{n_k,1}$;
\item[(B.k.2)]
$|F_{k-1}S_{n_k,1}|<(1+\delta_k)\cdot|S_{n_k,1}|$;
\item[(B.k.3)]
for every $1\le i\le m_{n_k}$, there is $c_{k-1,i}\in C(S_{l_{k-1},1})$ such that $S_{l_{k-1},1}c_{k-1,i}\subset S_{n_k,i}$, and moreover, $c_{k-1,1}=e$;
\item[(B.k.4)]
for every $1\le i\le m_{n_k}$, $|S_{l_{k-1},1}|$ is negligible compared with $|S_{n_k,i}|$, more precisely,
$$\frac{|x_{k-1}(S_{n_k,i}\setminus S_{l_{k-1},1}c_{k-1,i},\ast)|}{|S_{n_k,i}|}>\frac{1+\delta_{k-1}}{2},
\quad\forall1\le i\le m_{n_k}.$$
\end{enumerate}
For every $1\le i\le m_{n_k}$ we choose $x_{k,i}\in\hat{P}^{S_{n_k,i}}$ such that Conditions (C.k.1), (C.k.2), (C.k.3) are satisfied:
\begin{enumerate}
\item[(C.k.1)]
${x_{k,i}}|_{S_{l_{k-1},1}c_{k-1,i}}=w_{k-1}$;
\item[(C.k.2)]
if $S_{n_{k-1},j}c\subset S_{n_k,i}\setminus S_{l_{k-1},1}c_{k-1,i}$ for some $1\le j\le m_{n_{k-1}}$ and some $c\in C(S_{n_{k-1},j})$ then
$$(x_{k,i})_{gc}=(x_{k-1,j})_g,\quad\forall g\in S_{n_{k-1},j}\setminus x_{k-1,j}(S_{n_{k-1},j},\ast);$$
\item[(C.k.3)]
on the rest of coordinates in $S_{n_k,i}\setminus S_{l_{k-1},1}c_{k-1,i}$, there are appropriately many $\ast$'s such that
$$\frac{1+\delta_k}{2}<\frac{|x_{k,i}(S_{n_k,i},\ast)|}{|S_{n_k,i}|}\le\frac{1+\delta_k}{2}+\frac{1}{|S_{n_k,i}|}.$$
\end{enumerate}
Let
$$B_{k,i}=\left\{x=(x_g)_{g\in S_{n_k,i}}\in P^{S_{n_k,i}}:x_g=(x_{k,i})_g,\;\forall g\in S_{n_k,i}\setminus x_{k,i}(S_{n_k,i},\ast)\right\}.$$
We define $x_k\in\hat{P}^G$ by
$${x_k}|_{S_{n_k,i}c}=x_{k,i},\quad\forall1\le i\le m_{n_k},\;\forall c\in C(S_{n_k,i}).$$
We set
$$X_k=\left\{x\in P^G:x|_{S_{n_k,i}c}\in B_{k,i},\;\forall1\le i\le m_{n_k},\forall c\in C(S_{n_k,i})\right\}.$$

\bigskip

So far we have already generated $x_{k,i}$, $B_{k,i}$ ($1\le i\le m_{n_k}$), $x_k$ and $X_k$ in Step $k$ for all $k\in\mathbb{N}$. It follows from our construction that $\{X_k\}_{k=1}^\infty$ is a decreasing sequence of nonempty subsets of $P^G$, and
$$x_{k+1}|_{S_{n_k,1}}=x_m|_{S_{n_k,1}},\quad\forall k\in\mathbb{N},\;\forall m\ge k+1.$$
Now by the fact $\bigcup_{k=1}^{\infty}S_{n_k,1}=G$ we observe that if a point $x$ belongs to the intersection $\bigcap_{k=1}^{\infty}X_k$ then the value $x_g\in P$ ($g\in G$) for all its coordinates must be determined eventually according to our construction.
Thus, the intersection $\bigcap_{k=1}^{\infty}X_k$ contains in fact only one point. We set
$$\bigcap_{k=1}^{\infty}X_k=\{z\}.$$
Finally, we let $X\subset P^G$ be the orbit closure of $z$, i.e.
$$X=\overline{Gz}=\overline{\{gz:g\in G\}}.$$
Since $X$ is a closed subset of $P^G$ and is invariant under the $G$-shift, $(X,\sigma)$ becomes a subshift of $(P^G,\sigma)$. This eventually finishes the construction of $(X,\sigma)$. Now we check that $(X,\sigma)$ satisfies all the required properties.

\bigskip

\noindent\textbf{Part 2: Minimality of $(X,\sigma)$.}
To show that $(X,\sigma)$ is minimal, it suffices to prove that the point $z\in X$ is almost periodic, i.e. for any $\epsilon>0$ there exists a syndetic subset $S=S_\epsilon$ of $G$ with
$$\rho(z,cz)<\epsilon,\quad\forall c\in S.$$

\medskip

To see the latter statement, we fix $\epsilon>0$ arbitrarily. Since $S_{k,1}$ is increasing over $k\in\mathbb{N}$ and eventually covers the group $G$, there exists $m\in\mathbb{N}$ such that
$$x|_{S_{n_m,1}}=x^\prime|_{S_{n_m,1}}\quad\text{implies}\quad\rho(x,x^\prime)<\epsilon.$$
Since the tiling $\mathcal{T}_{n_{m+1}}$ is syndetic, $C(S_{n_{m+1},1})$ is syndetic. By the definition of $z$ in the construction, we have
$$z|_{S_{n_m,1}}=z|_{S_{n_m,1}c},\quad\forall c\in C(S_{n_{m+1},1})$$
i.e.
$$z|_{S_{n_m,1}}=(cz)|_{S_{n_m,1}},\quad\forall c\in C(S_{n_{m+1},1}).$$
It follows that
$$\rho(z,cz)<\epsilon,\quad\forall c\in C(S_{n_{m+1},1}).$$
Thus, we end this part with taking $S=C(S_{n_{m+1},1})$.

\bigskip

\noindent\textbf{Part 3: Mean dimension of $(X,\sigma)$.}
The aim of this part is to prove
$$\mdim(X,\sigma)=\frac12.$$

\medskip

The following well-known proposition is a useful tool for an upper bound of mean dimension of subshifts. We reproduce its proof for completeness.
\begin{proposition}\label{upperbound}
Let $K$ be a finite dimensional compact metric space and $(X,\sigma)$ a subshift of $(K^{G},\sigma)$. Then
$$\mdim(X,\sigma)\le\liminf_{n\to\infty}\frac{\dim(\pi_{F_{n}}(X))}{|F_{n}|}$$
for any F\o lner sequence $\{F_{n}\}_{n=1}^{\infty}$ of $G$.
\end{proposition}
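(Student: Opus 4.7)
The plan is to reduce $\Widim_\epsilon(X,d_{F_n})$ to the topological dimension of a projection of $X$ onto a slightly enlarged finite window, and then shrink the window back to $F_n$ using the F\o lner property.

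First I would exploit the summability of the weights $(\alpha_g)_{g\in G}$. Write $M=\sum_{g\in G}\alpha_g<\infty$ and $D=\diam_d(K)$. Fix $\epsilon>0$ and choose a finite $E\subset G$ with $e\in E$ and $D\sum_{g\notin E}\alpha_g<\epsilon/2$. For any $x,y\in X$ and any $g\in F_n$,
$$\rho(gx,gy)=\sum_{h\in E}\alpha_h\,d(x_{hg},y_{hg})+\sum_{h\notin E}\alpha_h\,d(x_{hg},y_{hg}).$$
Hence if $d(x_p,y_p)<\epsilon/(2M)$ for every $p\in EF_n$, then $\rho(gx,gy)<\epsilon$ for every $g\in F_n$, i.e.\ $d_{F_n}(x,y)<\epsilon$. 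This is the key bridge from the sum-type metric $d_{F_n}$ to the max-type metric $d_{l^\infty}$ on $K^{EF_n}$.

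Next I would use the definition of topological dimension: for any $\epsilon'>0$ there is an $\epsilon'$-embedding (with respect to $d_{l^\infty}$) of $\pi_{EF_n}(X)\subset K^{EF_n}$ into a polyhedron $P_n$ with $\dim P_n\le \dim(\pi_{EF_n}(X))$. Taking $\epsilon'=\epsilon/(2M)$ and composing with $\pi_{EF_n}$, the previous step shows the composite is an $\epsilon$-embedding of $X$ with respect to $d_{F_n}$, whence
$$\Widim_\epsilon(X,d_{F_n})\le\dim\bigl(\pi_{EF_n}(X)\bigr).$$

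To get back to $F_n$, I would split $EF_n=F_n\sqcup(EF_n\setminus F_n)$ and note $\pi_{EF_n}(X)\subset\pi_{F_n}(X)\times\pi_{EF_n\setminus F_n}(X)$, so by the product subadditivity of topological dimension and the inequality $\dim(K^m)\le m\dim(K)$,
$$\dim\bigl(\pi_{EF_n}(X)\bigr)\le\dim\bigl(\pi_{F_n}(X)\bigr)+|EF_n\setminus F_n|\cdot\dim(K).$$
Dividing by $|F_n|$, using $|EF_n\setminus F_n|/|F_n|\to 0$ by the F\o lner property (for $E$ fixed), and using that $\lim_n\Widim_\epsilon(X,d_{F_n})/|F_n|$ exists by Ornstein--Weiss, I obtain
$$\lim_n\frac{\Widim_\epsilon(X,d_{F_n})}{|F_n|}\le\liminf_n\frac{\dim(\pi_{F_n}(X))}{|F_n|}.$$
The right-hand side does not depend on $\epsilon$, so letting $\epsilon\to 0$ gives the claim.

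The main obstacle I anticipate is the mismatch between the sum-based metric $\rho$ on $K^G$ (and hence $d_{F_n}$ on $X$) and the sup-based metric $d_{l^\infty}$ used to define $\Widim$ on the projection; making the comparison quantitative requires choosing the truncation $E$ and the resolution $\epsilon'$ in the right order so that the tail contribution $D\sum_{h\notin E}\alpha_h$ and the main contribution $\epsilon'M$ are both controlled by $\epsilon/2$. The finite-dimensionality hypothesis on $K$ enters only to absorb the correction $|EF_n\setminus F_n|\cdot\dim(K)/|F_n|\to 0$.
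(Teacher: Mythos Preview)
Your proof is correct and follows essentially the same route as the paper's: choose a finite window $E$ (the paper's $A$) so that agreement on $EF_n$ forces $d_{F_n}$-closeness, bound $\Widim_\epsilon(X,d_{F_n})$ by $\dim(\pi_{EF_n}(X))$, then split off $K^{EF_n\setminus F_n}$ and use the F\o lner property. The only cosmetic difference is that you pass explicitly through an $\epsilon'$-embedding of $\pi_{EF_n}(X)$ into a polyhedron, whereas the paper uses the projection $\pi_{AF_n}$ itself as the $\epsilon$-embedding and invokes $\Widim_\epsilon\le\dim$ directly.
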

\begin{proof}
Let $\rho$ be the metric on $K^{G}$ defined by \eqref{metricrho}. Fix a F\o lner sequence $\{F_{n}\}_{n=1}^{\infty}$ of $G$ and take $\epsilon,\delta>0$. We choose $A\in\mathcal{F}(G)$ containing the identity element of $G$ such that if two points $x,y\in K^{G}$ satisfy $x|_{A}=y|_{A}$ then $\rho(x,y)<\epsilon$. It follows that for any $x,y\in K^{G}$ and $n\in\mathbb{N}$, if $x|_{AF_{n}}=y|_{AF_{n}}$ (i.e. $\pi_{AF_{n}}(x)=\pi_{AF_{n}}(y)$) then $\rho_{F_{n}}(x,y)<\epsilon$. Thus, for any $n\in\mathbb{N}$, $$(\pi_{AF_{n}})|_X:X\to\pi_{AF_{n}}(X)$$ is an $\epsilon$-embedding with respect to the metric $\rho_{F_{n}}$, and therefore $$\Widim_{\epsilon}(X,\rho_{F_{n}})\le\dim(\pi_{AF_{n}}(X)).$$
By noting that $$\pi_{AF_{n}}(X)\subset\pi_{F_{n}}(X)\times K^{AF_{n}\setminus F_{n}}$$ we have
$$\Widim_{\epsilon}(X,\rho_{F_{n}})\le\dim(\pi_{AF_{n}}(X))\le\dim(\pi_{F_{n}}(X))+|AF_{n}\setminus F_{n}|\cdot\dim(K)$$
for all $n\in\mathbb{N}$.

Take a sufficiently large $N\in\mathbb{N}$ such that
$$\frac{|AF_{n}\setminus F_{n}|}{|F_{n}|}<\frac{\delta}{\dim(K)+1},$$
for all $n\ge N$. Then
\begin{align*}
\lim_{n\to\infty}\frac{\Widim_{\epsilon}(X,\rho_{F_{n}})}{|F_{n}|}
&\le\liminf_{n\to\infty}\frac{\dim(\pi_{F_{n}}(X))}{|F_{n}|}+\limsup_{n\to\infty}\frac{|AF_{n}\setminus F_{n}|}{|F_{n}|}\dim(K)\\
&\le\liminf_{n\to\infty}\frac{\dim(\pi_{F_{n}}(X))}{|F_{n}|}+\delta.
\end{align*}
Since $\delta>0$ is arbitrary,
$$\lim_{n\to\infty}\frac{\Widim_{\epsilon}(X,\rho_{F_{n}})}{|F_{n}|}\le\liminf_{n\to\infty}\frac{\dim(\pi_{F_{n}}(X))}{|F_{n}|}.$$
Letting $\epsilon\to0$, we end the proof.
\end{proof}

\medskip

To estimate $\mdim(X,\sigma)$ from above, we fix $k\in\mathbb{N}$ and $\epsilon>0$ arbitrarily. We denote by $\widetilde{X_k}$ the subshift of $P^G$ generated by $X_k$, namely,
$$\widetilde{X_k}=\overline{GX_k}=\overline{\bigcup_{g\in G}gX_k}.$$
Take a F\o lner sequence $\{E_n\}_{n=1}^\infty$ of $G$. By Proposition \ref{bigproportion}, there exists $N_0\in\mathbb{N}$ sufficiently large such that for any $n\ge N_0$, the union of $\mathcal{T}_{n_k}g$-tiles which are contained in $E_n$ has proportion larger than $1-\epsilon$ for all $g\in G$, i.e.
$$\frac{|\bigcup_{T\in\mathcal{T}_{n_k}g,T\subset E_n}T|}{|E_n|}>1-\epsilon,\quad\forall n\ge N_0,\;\forall g\in G.$$
For any $n\ge N_0$ we divide $G$ into $L_{n,k}$ classes $Q_1,Q_2,\dots,Q_{L_{n,k}}$ such that if $g,h\in Q_i$ for some $1\le i\le L_{n,k}$ then
$$\mathcal{T}_{n_k}g|_{E_n}=\mathcal{T}_{n_k}h|_{E_n},$$
where $\mathcal{T}_{n_k}g|_{E_n}=\{Tg\cap E_n:T\in\mathcal{T}_{n_k}\}$. Since $E_n$ is finite, $L_{n,k}$ is a finite number. For each $1\le i\le L_{n,k}$ we take $q_i\in Q_i$. For every $n\ge N_0$ and every $1\le i\le L_{n,k}$, there is $j_{n,i}\in\mathbb{N}$ such that
$$\mathcal{T}_{n_k}q_i|_{E_n}=\left\{S_{n_k,p_{n,1}}c_{n,1}q_i,S_{n_k,p_{n,2}}c_{n,2}q_i,\dots,S_{n_k,p_{n,j_{n,i}}}c_{n,j_{n,i}}q_i,\;A_{n,i}\right\}$$
for some $1\le p_{n,l}\le m_{n_k}$, $c_{n,l}\in C(S_{n_k,p_{n,l}})$ ($1\le l\le j_{n,i}$) and some $A_{n,i}\subset E_n$ with $|A_{n,i}|/|E_n|<\epsilon$. By the construction of $\widetilde{X_k}$,
$$\pi_{E_n}(\widetilde{X_k})\subset\bigcup_{1\le i\le L_{n,k}}B_{k,p_{n,1}}\times B_{k,p_{n,2}}\times\cdots\times B_{k,p_{n,j_{n,i}}}\times P^{A_{n,i}},\quad\forall n\ge N_0.$$
Thus, we have
\begin{align*}
\frac{\dim(\pi_{E_n}(\widetilde{X_k}))}{|E_n|}
&\le\max_{1\le i\le L_{n,k}}\frac{\dim(B_{k,p_{n,1}}\times B_{k,p_{n,2}}\times\cdots\times B_{k,p_{n,j_{n,i}}}\times P^{A_{n,i}})}{|E_n|}\\
&\le\max_{1\le i\le L_{n,k}}\frac{\sum_{1\le l\le j_{n,i}}\dim(B_{k,p_{n,l}})+|A_{n,i}|}{|E_n|}\\
&\le\max_{1\le i\le L_{n,k}}\frac{\sum_{1\le l\le j_{n,i}}\left((1+\delta_k)/2+1/|S_{n_k,p_{n,l}}|\right)\cdot|S_{n_k,p_{n,l}}|}{|E_n|}+\epsilon\\
&<\frac{1+\delta_k}{2}+\frac{1}{\min\{|S_{n_k,j}|:1\le j\le m_{n_k}\}}+\epsilon
\end{align*}
for all $n\ge N_0$. By Proposition \ref{upperbound}, we obtain
$$\mdim(\widetilde{X_k},\sigma)\le\frac{1+\delta_k}{2}+\frac{1}{\min\{|S_{n_k,j}|:1\le j\le m_{n_k}\}}+\epsilon.$$
Since $k\in\mathbb{N}$ and $\epsilon>0$ are arbitrary, and since $$\mdim(X,\sigma)\le\mdim(\widetilde{X_k},\sigma)$$ for all $k\in\mathbb{N}$, it follows that
$$\mdim(X,\sigma)\le\lim_{k\to\infty}\left(\frac{1+\delta_k}{2}+\frac{1}{\min\{|S_{n_k,j}|:1\le j\le m_{n_k}\}}\right)=\frac12.$$

\bigskip

In order to show
$$\mdim(X,\sigma)\ge\frac12,$$
we need more preparations. Set
$$T_1=S_{n_1,1},\quad T_k=S_{n_k,1}h_{k-1}^{-1}\cdots h_1^{-1},\quad\forall k\ge2.$$
Since $\{S_{n_k,1}\}_{k=1}^\infty$ is a F\o lner sequence of $G$, so is the sequence $\{T_k\}_{k=1}^\infty$. According to the choice of $l_k$, we have
$$S_{n_k,1}\subset S_{l_k,1}h_k^{-1}\subset S_{n_{k+1},1}h_k^{-1},\quad\forall k\in\mathbb{N}.$$
It follows that
$$T_k=S_{n_k,1}h_{k-1}^{-1}\cdots h_1^{-1}\subset S_{n_{k+1},1}h_k^{-1}h_{k-1}^{-1}\cdots h_1^{-1}=T_{k+1},\quad\forall k\in\mathbb{N}.$$
By (B.k.1),
$$g_k\in S_{n_{k+1},1}h_k^{-1}\cdots h_1^{-1}=T_{k+1},\quad\forall k\in\mathbb{N}.$$
Therefore $\{T_k\}_{k=1}^\infty$ is an increasing F\o lner sequence of $G$ with $$\bigcup_{k=1}^{\infty}T_k=G.$$
Set
$$J_1=\{g\in S_{n_1,1}:(x_{1,1})_g=\ast\},$$
$$J_k=\{g\in S_{n_k,1}:(x_{k,1})_g=\ast\}h_{k-1}^{-1}\cdots h_1^{-1},\quad\forall k\ge2.$$
It follows from (A.k.3), (B.k.3), (C.k.1) that
$$\left\{g\in S_{n_k,1}:(x_{k,1})_g=\ast\right\}h_k\subset\left\{g\in S_{n_{k+1},1}:(x_{k+1,1})_g=\ast\right\},\quad\forall k\in\mathbb{N}.$$
Thus,
$$J_k\subset J_{k+1},\quad\forall k\in\mathbb{N}.$$
Let
$$J=\bigcup_{k=1}^{\infty}J_k.$$

\medskip

\begin{lemma}\label{projection}
For any $u,v\in P^J$ we can find $x,y\in X$ such that
$$x|_J=u,\quad y|_J=v,\quad x|_{G\setminus J}=y|_{G\setminus J}.$$
\end{lemma}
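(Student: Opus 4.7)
The plan is to construct, for each $u\in P^J$, a point $x_u\in X$ with $x_u|_J = u$ and $x_u|_{G\setminus J} = \zeta$ for a fixed pattern $\zeta\in P^{G\setminus J}$ that does \emph{not} depend on $u$. Setting $x=x_u$ and $y=x_v$ will then yield the lemma.

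Let $\tau_k := h_1h_2\cdots h_{k-1}$ (with $\tau_1=e$) and $\tilde J_k := x_{k,1}(S_{n_k,1},\ast)$, so $J_k = \tilde J_k\tau_k^{-1}$ and $T_k = S_{n_k,1}\tau_k^{-1}$. Given $u\in P^J$, for each $k\ge 1$ choose $r_k\in R_k$ such that the function $g\mapsto (w_k)_{g\tau_k r_k}$ on $J_k$ approximates $u|_{J_k}$ to within $\delta_k$ in the $\ell^\infty$ sense. This is possible because (A.k+1.2) asserts $\{w_k|_{\tilde J_k r}:r\in R_k\}=P_k^{\tilde J_k}$ while $P_k$ is $\delta_k$-dense in $P$. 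Set $t_k := \tau_k r_k\in G$; the claim is that $t_k z$ converges coordinatewise to $x_u$ in $P^G$.

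For $g\in J_k$ one has $g t_k = g\tau_k r_k\in \tilde J_k r_k\subset S_{n_k,1}R_k\subset S_{l_k,1}\subset S_{n_{k+1},1}$. Since $z\in X_{k+1}$ and (C.k+1.1) gives $x_{k+1,1}|_{S_{l_k,1}}=w_k$, while $w_k$ is entirely $\ast$-free on $S_{n_k,1}R_k$ by (A.k+1.1)--(A.k+1.2), we obtain $(t_k z)_g = (w_k)_{g\tau_k r_k}$, within $\delta_k$ of $u_g$. Because $J_k\subset J_n$ for $n\ge k$ and $\delta_n\to 0$, this forces $(t_n z)_g\to u_g$ for every $g\in J$. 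For $g\in G\setminus J$, take $k$ large enough that $g\in T_k$; then $g\tau_k\in S_{n_k,1}\setminus\tilde J_k$ (since $g\notin J_k$), and (A.k+1.1) gives $(w_k)_{g\tau_k r_k}=(x_{k,1})_{g\tau_k}$, independent of $r_k$. A telescoping identity via (A.k+1.3) (applied at $c=h_k\in C(S_{n_k,1})\setminus R_k$) and (C.k+1.1) yields
\begin{equation*}
(x_{k+1,1})_{g\tau_{k+1}} = (x_{k+1,1})_{g\tau_k h_k} = (w_k)_{g\tau_k h_k} = (x_{k,1})_{g\tau_k},
\end{equation*}
so $(t_k z)_g$ stabilizes in $k$ to a value $\zeta_g\in P$ depending only on $g$.

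Combining both cases, $t_k z\to x_u$ coordinatewise, and hence in the metric $\rho$, with $x_u|_J = u$ and $x_u|_{G\setminus J} = \zeta$; closedness of $X=\overline{Gz}$ yields $x_u\in X$. Applying the construction to $u$ and to $v$ produces the required pair. The main technical obstacle I foresee is the noncommutative bookkeeping of the shifts $\tau_k$, $r_k$, $h_k$ and verifying that the positions $g\tau_k r_k$ always land in the star-free region $S_{n_k,1}R_k$ of $w_k$; once this is pinned down, the dichotomy between (A.k+1.1)--(A.k+1.2) (governing the free coordinates $J$) and (A.k+1.3) combined with (C.k+1.1) (delivering stabilization on $G\setminus J$) separates the two analyses cleanly.
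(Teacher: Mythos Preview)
Your argument is correct and follows the same underlying idea as the paper: realize approximations to $u$ on $J_k$ via the copies indexed by $R_k$ inside $w_k$, and obtain a $u$-independent limit on $G\setminus J$ via the stabilization $(x_{k+1,1})_{g\tau_{k+1}}=(x_{k,1})_{g\tau_k}$ coming from $w_k|_{S_{n_k,1}h_k}=x_{k,1}$.

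The difference is purely in the implementation. The paper first approximates $u$ by a sequence $u^m\in P_m^J$, then for each $k$ and each $m>k$ locates $\overline{u}_k^m$ inside $z$ via some $r_{k,m}\in R_m$, takes a limit in $m$ to produce auxiliary points $z'_k\in X$ with $(h_1\cdots h_{k-1}z'_k)|_{J_k}=u|_{J_k}$, and finally takes a further limit in $k$. You collapse this double limit into a single diagonal sequence $t_k z=\tau_k r_k z$, choosing $r_k\in R_k$ so that the $J_k$-pattern is already $\delta_k$-close to $u|_{J_k}$; convergence on $J$ follows from $\delta_k\to 0$ and the nesting $J_k\subset J_{k+1}$, while convergence on $G\setminus J$ is the same eventual-constancy argument. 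Your route is more economical and makes the independence of $\zeta=x_u|_{G\setminus J}$ from $u$ more transparent; the paper's route has the minor advantage that the intermediate points $z'_k$ satisfy exact equalities on $J_k$ (not just $\delta_k$-approximations), but this is not needed for the lemma.
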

\begin{proof}
Take $u=(u_g)_{g\in J}\in P^J$. For every $k\in\mathbb{N}$ we define $\overline{u}_k\in B_{k,1}\subset P^{S_{n_k,1}}$ by
$$(\overline{u}_k)_g=\begin{cases}
(x_{k,1})_g,&\quad g\in S_{n_k,1}\setminus J_kh_1\cdots h_{k-1},\\
u_{gh_{k-1}^{-1}\cdots h_1^{-1}},&\quad g\in J_kh_1\cdots h_{k-1}.
\end{cases}$$
For each $m\in\mathbb{N}$ we take $u^m=(u_g^m)_{g\in J}\in P_m^J$ such that
$$\lim_{m\to\infty}u_g^m=u_g,\footnote{We mean $\lim_{m\to\infty}d(u_g^m,u_g)=0$.}\quad\forall g\in J.$$
For every $k\in\mathbb{N}$ and every $m\in\mathbb{N}$ we define $\overline{u}_k^m\in B_{k,1}\subset P^{S_{n_k,1}}$ by
$$(\overline{u}_k^m)_g=\begin{cases}
(x_{k,1})_g,&\quad g\in S_{n_k,1}\setminus J_kh_1\cdots h_{k-1},\\
u_{gh_{k-1}^{-1}\cdots h_1^{-1}}^m,&\quad g\in J_kh_1\cdots h_{k-1}.
\end{cases}$$
Clearly,
$$\lim_{m\to\infty}\overline{u}_k^m=\overline{u}_k,\quad\forall k\in\mathbb{N}.$$
Since
$$x_{k,1}=x_{k+1,1}|_{S_{n_k,1}h_k}=x_{k+2,1}|_{S_{n_k,1}h_kh_{k+1}}=\cdots=x_{m,1}|_{S_{n_k,1}h_kh_{k+1}\cdots h_{m-1}},\quad\forall m>k\ge1,$$
we have
$$x_{k,1}(S_{n_k,1},\ast)h_kh_{k+1}\cdots h_{m-1}\subset x_{m,1}(S_{n_m,1},\ast),\quad\forall m>k\ge1.$$
It follows that
$$P_k^{x_{k,1}(S_{n_k,1},\ast)}\subset\left\{{x_{m+1,1}}|_{x_{k,1}(S_{n_k,1},\ast)h_kh_{k+1}\cdots h_{m-1}r}:r\in R_m\right\},\quad\forall m>k\ge1.$$
Thus, for every $m>k\ge1$ there is some $r_{k,m}\in R_m$ such that
\begin{align*}
\overline{u}_k^m
&={x_{m+1,1}}|_{S_{n_k,1}h_k\cdots h_{m-1}r_{k,m}}\\
&=z|_{S_{n_k,1}h_k\cdots h_{m-1}r_{k,m}}\\
&=(h_k\cdots h_{m-1}r_{k,m}z)|_{S_{n_k,1}}.
\end{align*}
Notice that for any $k\in\mathbb{N}$, the limit of the sequence $\{h_k\cdots h_{m-1}r_{k,m}z\}_{m=1}^\infty$ exists. We assume
$$\lim_{m\to\infty}h_k\cdots h_{m-1}r_{k,m}z=z^\prime_k,\quad\forall k\in\mathbb{N}.$$
Clearly,
$$z^\prime_k\in X,\quad\overline{u}_k={z^\prime_k}|_{S_{n_k,1}},\quad\forall k\in\mathbb{N}.$$
Note that
$$(h_1\cdots h_{k-1}z^\prime_k)|_{J_k}={z^\prime_k}|_{J_kh_1\cdots h_{k-1}}={\overline{u}_k}|_{J_kh_1\cdots h_{k-1}}=u|_{J_k},\quad\forall k\in\mathbb{N}.$$
Let
$$x=\lim_{k\to\infty}h_1\cdots h_{k-1}z^\prime_k\in X.$$
We have
$$x|_J=u.$$
For the moment let us take an arbitrary $g\in G\setminus J$. We recall here that $S_{n_k,1}h_{k-1}^{-1}\cdots h_1^{-1}=T_k$ and $J_k$ are increasing over $k\in\mathbb{N}$, and eventually cover $G$ and $J$, respectively. So there is some $l(g)\in\mathbb{N}$ satisfying
$$g\in S_{n_k,1}h_{k-1}^{-1}\cdots h_1^{-1}\setminus J_k,\quad\forall k\ge l(g).$$
Since for every $k\ge l(g)$ it holds that
\begin{align*}
(h_1\cdots h_{k-1}z^\prime_k)|_{S_{n_k,1}h_{k-1}^{-1}\cdots h_1^{-1}\setminus J_k}
&={z^\prime_k}|_{S_{n_k,1}\setminus(J_kh_1\cdots h_{k-1})}\\
&={\overline{u}_k}|_{S_{n_k,1}\setminus(J_kh_1\cdots h_{k-1})}\\
&={x_{k,1}}|_{S_{n_k,1}\setminus(J_kh_1\cdots h_{k-1})}\\
&=z|_{S_{n_k,1}\setminus(J_kh_1\cdots h_{k-1})},
\end{align*}
by letting $k\to\infty$ we get
$$x_g=z_{gh_1\cdots h_{l(g)-1}}.$$
Since $g\in G\setminus J$ is arbitrary,
$$x|_{G\setminus J}=(z_{gh_1\cdots h_{l(g)-1}})_{g\in G\setminus J}.$$

\medskip

Now we take $v\in P^J$. Following the same procedure, we find $y\in X$ such that $y|_J=v$ and
$$y|_{G\setminus J}=(z_{gh_1\cdots h_{l(g)-1}})_{g\in G\setminus J}=x|_{G\setminus J}.$$
This completes the proof.
\end{proof}

\medskip

\begin{lemma}\label{increasingdistance}
For every $k\in\mathbb{N}$ there is a continuous mapping
$$f_k:(P^{J_k},d_{l^\infty})\to(X,\rho_{T_k})$$
such that
$$d_{l^\infty}(u,v)\le\rho_{T_k}(f_k(u),f_k(v)),\quad\forall u,v\in P^{J_k}.$$
\end{lemma}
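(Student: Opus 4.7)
The key observation is that $J_k \subset T_k$: directly from the definitions,
$$J_k = \{g \in S_{n_k,1} : (x_{k,1})_g = \ast\}\, h_{k-1}^{-1} \cdots h_1^{-1} \;\subset\; S_{n_k,1}\, h_{k-1}^{-1} \cdots h_1^{-1} = T_k.$$
Combined with the elementary fact that $\rho(gx, gy) \geq \alpha_e \, d(x_g, y_g) = d(x_g, y_g)$ for every $g \in G$ (obtained by keeping only the $h = e$ term in \eqref{metricrho}), it suffices to construct a continuous map $f_k : P^{J_k} \to X$ with $(f_k(u))_g = u_g$ for every $g \in J_k$ and every $u \in P^{J_k}$. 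Given such $f_k$, the required inequality follows at once:
$$\rho_{T_k}(f_k(u), f_k(v)) \geq \max_{g \in T_k} d(f_k(u)_g, f_k(v)_g) \geq \max_{g \in J_k} d(u_g, v_g) = d_{l^{\infty}}(u,v).$$

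My plan for constructing such an $f_k$ is to imitate the proof of Lemma \ref{projection}, suitably localized to $J_k$. Given $u \in P^{J_k}$, I would first choose a discretization $u^{[m]} \in P_m^{J_k}$ with $u^{[m]} \to u$ as $m \to \infty$, depending continuously on $u$ (e.g.\ through a fixed continuous extension $u \mapsto \tilde u \in P^J$ with $\tilde u|_{J \setminus J_k}$ equal to some fixed base point $p_0 \in P$, followed by a nearest-point projection onto $P_m$). For every $m \geq k$, the nesting $J_k h_1 \cdots h_{m-1} \subset x_{m,1}(S_{n_m,1},\ast)$ together with condition (A.$m{+}1$.2) furnishes $r_m \in R_m$ with $(w_m)_{g h_1 \cdots h_{m-1} r_m} = u^{[m]}_g$ for all $g \in J_k$. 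Setting $g_m := h_1 h_2 \cdots h_{m-1} r_m$ and using $z|_{S_{l_m,1}} = w_m$, the shift $y_m := g_m z \in Gz \subset X$ satisfies $(y_m)_g = u^{[m]}_g$ for $g \in J_k$, and by (A.$m{+}1$.1) together with the framework nesting $x_{m,1}|_{S_{n_k,1} h_k \cdots h_{m-1}} = x_{k,1}$, also $(y_m)_g = z_{g h_1 \cdots h_{k-1}}$ for every $g \in T_k \setminus J_k$. I would then define $f_k(u)$ as a limit point of $\{y_m\}$ in $X$ (which exists by compactness), so that $f_k(u)|_{J_k} = u$ and $f_k(u)|_{T_k \setminus J_k}$ equals the constant (in $u$) framework $g \mapsto z_{g h_1 \cdots h_{k-1}}$.

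The main obstacle is continuity of $f_k$, since the selection $r_m \in R_m$ is inherently discrete in $u^{[m]}$ and different $u$'s may lead to different subsequential limits on $G \setminus T_k$. On $J_k$ the values of $f_k(u)$ are literally $u_g$, trivially continuous in $u$; on $T_k \setminus J_k$ they are a fixed constant independent of $u$; the delicate part is the coordinates in $G \setminus T_k$. My plan to resolve this is to mirror the full argument of Lemma \ref{projection} applied to the extended configuration $\tilde u \in P^J$: inspection of that proof shows that the intermediate choices $\overline{u}_k^m$ and the selections $r_{k,m} \in R_m$ can be made so that, for each fixed finite window $F \subset G$, $y_m|_F$ stabilizes to a value depending continuously on $\tilde u$ (since the framework positions dominate and the asterisk positions depend continuously on the discretization as $\delta_m \to 0$). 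This yields a continuous $f_k(u) := x(\tilde u) \in X$ with $f_k(u)|_J = \tilde u$, hence in particular $f_k(u)|_{J_k} = u$; the metric inequality established in the first paragraph then completes the proof.
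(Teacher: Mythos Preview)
Your approach is correct and coincides with the paper's: extend $u\in P^{J_k}$ to $\tilde u\in P^J$ by a fixed base point $p_0$ on $J\setminus J_k$, apply Lemma~\ref{projection} to obtain $f_k(u):=x(\tilde u)\in X$ with $f_k(u)|_J=\tilde u$, and then verify the metric inequality via $J_k\subset T_k$ and $\alpha_e=1$ exactly as you do.

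The only place you make life harder than necessary is the continuity discussion. You worry that the discrete selections $r_m\in R_m$ may cause $f_k(u)|_{G\setminus T_k}$ to jump with $u$, and you propose to argue that the limit ``depends continuously on $\tilde u$''. In fact Lemma~\ref{projection} gives something stronger and simpler: its second conclusion $x|_{G\setminus J}=y|_{G\setminus J}$ (valid for \emph{any} two inputs) says that $f_k(u)|_{G\setminus J}$ is \emph{independent} of $u$. Since by construction $f_k(u)|_{J\setminus J_k}\equiv p_0$ is also constant in $u$, and $f_k(u)|_{J_k}=u$, every coordinate of $f_k(u)$ is either a fixed constant or literally $u_g$; continuity in the product topology is then immediate, with no need to track the approximants $y_m$ or the choices $r_m$ at all. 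This is precisely how the paper argues.
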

\begin{proof}
We fix $k\in\mathbb{N}$. We take a point $p\in P$. For every $u=(u_g)_{g\in J_k}\in P^{J_k}$ we define $u^\prime\in P^J$ by
$$(u^\prime)_g=\begin{cases}
u_g,&\quad g\in J_k,\\
p,&\quad g\in J\setminus J_k.
\end{cases}$$
Applying Lemma \ref{projection} to $u^\prime\in P^J$, there exists $x(u^\prime)\in X$ such that
$$x(u^\prime)|_J=u^\prime.$$
We define a mapping as follows:
$$f_k:P^{J_k}\to X,\quad u\mapsto x(u^\prime).$$
Notice that for any $u,v\in P^{J_k}$,
$$f_k(u)|_{G\setminus J}=f_k(v)|_{G\setminus J},\quad\quad f_k(u)|_{J_k}=u,\quad f_k(v)|_{J_k}=v,$$$$f_k(u)|_{J\setminus J_k}=u^\prime|_{J\setminus J_k}=v^\prime|_{J\setminus J_k}=f_k(v)|_{J\setminus J_k}.$$
Thus, $f_k$ is continuous. Moreover,
\begin{align*}
\rho_{T_k}\left(f_k(u),f_k(v)\right)
&=\max_{h\in T_k}\rho\left(hf_k(u),hf_k(v)\right)\\
&=\max_{h\in T_k}\sum_{g\in G}\alpha_gd\left(f_k(u)_{gh},f_k(v)_{gh}\right)\\
&\ge\max_{h\in T_k}d\left(f_k(u)_h,f_k(v)_h\right)\quad\quad(\text{since}\;\alpha_e=1)\\
&\ge\max_{h\in J_k}d\left(f_k(u)_h,f_k(v)_h\right)\quad\quad(\text{since}\;J_k\subset T_k)\\
&=\max_{h\in J_k}d\left(u_h,v_h\right)\\
&=d_{l^\infty}\left(u,v\right).
\end{align*}
\end{proof}

\medskip

We are now able to deal with $\mdim(X,\sigma)$ from below. By Lemma \ref{increasingdistance}, we know that for any $\epsilon>0$ and any $k\in\mathbb{N}$,
$$\Widim_\epsilon\left(X,\rho_{T_k}\right)\ge\Widim_\epsilon\left(P^{J_k},d_{l^\infty}\right).$$
By Theorem \ref{widimcube} and the fact that $[0,1]\subset P$\footnote{Strictly speaking, $[0,1]$ is topologically embedded in $P$.}, we have
\begin{align*}
\mdim(X,\sigma)
&=\lim_{\epsilon\to0}\lim_{k\to\infty}\frac{\Widim_\epsilon\left(X,\rho_{T_k}\right)}{|T_k|}\\
&\ge\lim_{\epsilon\to0}\lim_{k\to\infty}\frac{\Widim_\epsilon\left(P^{J_k},d_{l^\infty}\right)}{|T_k|}\\
&\ge\lim_{\epsilon\to0}\lim_{k\to\infty}\frac{\Widim_\epsilon\left([0,1]^{J_k},d_{l^\infty}\right)}{|T_k|}\\
&=\lim_{k\to\infty}\frac{|J_k|}{|T_k|}.
\end{align*}
It follows from (C.k.3) that
$$\frac{1+\delta_k}{2}<\frac{|x_{k,1}(S_{n_k,1},\ast)|}{|S_{n_k,1}|}=\frac{|J_k|}{|T_k|}\le\frac{1+\delta_k}{2}+\frac{1}{|T_k|}.$$
Since $k\in\mathbb{N}$ is arbitrary, we obtain
$$\lim_{k\to\infty}\frac{|J_k|}{|T_k|}=\frac12.$$
Thus, $\mdim(X,\sigma)\ge1/2$.

\medskip

So we finally conclude
$$\mdim(X,\sigma)=\frac12.$$

\bigskip

\noindent\textbf{Part 4: $(X,\sigma)$ cannot be embedded in the full $G$-shift on $[0,1]^G$.}
We shall denote by $([0,1]^G,\sigma^\prime)$ the full $G$-shift on $[0,1]^G$. To complete the whole proof, it remains to show that $(X,\sigma)$ cannot be embedded in $([0,1]^G,\sigma^\prime)$.

\medskip

Recall that $\rho$ and $\rho^\prime$ are the metrics on $P^G$ and $[0,1]^G$, respectively. We assume that there is an embedding
$$f:(X,\sigma)\to([0,1]^G,\sigma^\prime).$$
The paper will end with a contradiction.

\medskip

As $f^{-1}:f(X)\to X$ is a homeomorphism, we fix $\epsilon>0$ such that
$$\rho^\prime(f(x),f(y))<\epsilon\;\text{ implies }\;\rho(x,y)<\frac13,\quad\forall x,y\in X.$$
Since $f\circ\sigma=\sigma^\prime\circ f$, we deduce that
$$\rho_{T_k}^\prime(f(x),f(y))<\epsilon\,\text{ implies }\,\rho_{T_k}(x,y)<\frac13,\quad\forall k\in\mathbb{N},\;\forall x,y\in X.$$
We take $N\in\mathbb{N}$ sufficiently large such that
$$x|_{F_N}=y|_{F_N}\,\text{ implies }\,\rho^\prime(x,y)<\epsilon,\quad\forall x,y\in[0,1]^G.$$
It follows that
$$x|_{F_NT_k}=y|_{F_NT_k}\,\text{ implies }\,\rho_{T_k}^\prime(x,y)<\epsilon,\quad\forall k\in\mathbb{N},\;\forall x,y\in[0,1]^G.$$
For any $k\in\mathbb{N}$ we let
$$\pi_{F_NT_k}:[0,1]^G\to[0,1]^{F_NT_k}$$
be the canonical projection mapping. Consider the mapping
$$\pi_{F_NT_k}\circ f:(X,\rho_{T_k})\to[0,1]^{F_NT_k}.$$
Clearly, $\pi_{F_NT_k}\circ f:(X,\rho_{T_k})\to[0,1]^{F_NT_k}$ is a $(1/3)$-embedding for every $k\in\mathbb{N}$. By Lemma \ref{increasingdistance}, we deduce that
$$\pi_{F_NT_k}\circ f\circ f_k:(P^{J_k},d_{l^\infty})\to[0,1]^{F_NT_k}$$
becomes a $(1/3)$-embedding for every $k\in\mathbb{N}$. It follows from Theorem \ref{epsilonembedding} that
$$|F_NT_k|\ge2|J_k|,\quad\forall k\in\mathbb{N}.$$
However, by (B.k.2) and (C.k.3) we have
$$(1+\delta_k)\cdot|S_{n_k,1}|<2|J_k|\le|F_NT_k|=|F_NS_{n_k,1}|\le|F_{k-1}S_{n_k,1}|<(1+\delta_k)\cdot|S_{n_k,1}|$$
for all $k>N$, a contradiction. Thus, we conclude.

\bigskip

\bigskip

\end{document}